\newtheorem{proposition}{Proposition}
\newtheorem{definition}{Definition}
\newtheorem{lemma}[proposition]{Lemma}
\newtheorem{theorem}[proposition]{Theorem}
\newtheorem{corollary}[proposition]{Corollary}
\theoremstyle{definition}
\theoremstyle{definition}
\numberwithin{proposition}{section}
\DeclareSymbolFont{AMSb}{U}{msb}{m}{n}
\DeclareMathSymbol{\N}{\mathbin}{AMSb}{"4E}
\DeclareMathSymbol{\Z}{\mathbin}{AMSb}{"5A}
\DeclareMathSymbol{\R}{\mathbin}{AMSb}{"52}
\DeclareMathSymbol{\Q}{\mathbin}{AMSb}{"51}
\DeclareMathSymbol{\I}{\mathbin}{AMSb}{"49}
\DeclareMathSymbol{\C}{\mathbin}{AMSb}{"43}
\newtheorem{fact}{Fact}
\newtheorem{claim}{Claim}
\title[Centralizers of Rank-1 Homeomorphisms]{Centralizers of Rank-1 Homeomorphisms}
\author{Aaron Hill}
\begin{document}
\maketitle

\begin{abstract}
We give a definition for a rank-1 homeomorphism of a zero-dimensional Polish space $X$.  We show that if a rank-1 homeomorphism of $X$ satisfies a certain non-degeneracy condition, then it has trivial centralizer in the group of all homeomorphisms of $X$, i.e., it commutes only with its integral powers. 
\end{abstract}

\section{Introduction}

Let Aut$(X,\mu)$ denote the group of invertible measure-preserving transformations of a standard Lebesgue space $(X,\mu)$, taken modulo null sets and equipped with the weak topology.  The subset of Aut$(X,\mu)$ consisting of rank-1 transformations has been extensively studied.  One of the important results about rank-1 transformations is King's weak closure theorem \cite{King1}:  

\begin{theorem}[King, 1986]  
If $T \in \textnormal{Aut}(X,\mu)$ is rank-1, then the centralizer of $T$ in the group Aut$(X,\mu)$ equals $\overline{\{T^i: i \in \mathbb{Z}\}}$.  
\end{theorem}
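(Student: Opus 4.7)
The easy direction---that the weak closure of $\{T^i : i \in \mathbb{Z}\}$ sits inside the centralizer of $T$---is immediate from joint continuity of multiplication in the weak topology together with the fact that $T$ commutes with each $T^i$. The content is the reverse inclusion: given $S \in \textnormal{Aut}(X,\mu)$ with $ST = TS$, I must exhibit $S$ as a weak limit of integer powers of $T$.

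My plan is to exploit the rank-1 tower structure directly. I would fix Rokhlin towers $\mathcal{C}_n = \{T^j B_n : 0 \leq j < h_n\}$ witnessing the rank-1 property, so the union of levels has measure tending to $1$ and the algebras generated by the levels become dense in the Borel $\sigma$-algebra mod null sets. Since $S$ commutes with $T$, we have $S(T^j B_n) = T^j(SB_n)$, so the entire action of $S$ on $\mathcal{C}_n$ is encoded by the single set $SB_n$, which has measure $\mu(B_n)$. I would then decompose $SB_n$ according to its intersections with the levels of $\mathcal{C}_n$, obtaining an approximate sub-probability distribution
\[
p_n(j) = \frac{\mu(SB_n \cap T^j B_n)}{\mu(B_n)}, \qquad |j| < h_n,
\]
whose total mass tends to $1$ because the tower fills $X$ in the limit.

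Next, for each $n$ I would choose $j_n$ maximizing $p_n(j_n)$ and aim to show $T^{j_n} \to S$ in the weak (equivalently, for $\textnormal{Aut}(X,\mu)$, strong) topology. Because the level algebras are dense and both $T^{j_n}$ and $S$ commute with $T$, a density argument reduces the convergence $\mu(T^{j_n} A \bigtriangleup SA) \to 0$ for arbitrary measurable $A$ to the single statement $\mu(T^{j_n} B_m \bigtriangleup S B_m) \to 0$ as $n \to \infty$ for every fixed $m$. This in turn is equivalent to saying that the distribution $p_n$, pushed down to the smaller tower $\mathcal{C}_m$, concentrates near the correct residue of $j_n$ modulo $h_m$; a diagonal choice of $j_n$ would then deliver the conclusion.

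The principal obstacle is establishing the concentration step: that for large $n$ some single value $p_n(j_n)$ is close to $1$, rather than $SB_n$ being genuinely smeared across many levels of $\mathcal{C}_n$. Heuristically, any real smearing of $SB_n$ must, by $T$-commutation, propagate coherently to a smearing of every level $T^j(SB_n)$, and this would force overlaps incompatible with $S$ being a measure-preserving \emph{bijection} at the fine scale supplied by $\mathcal{C}_n$. Turning this rigidity into the quantitative bound $p_n(j_n) \to 1$ is the heart of King's argument; once it is in hand, the diagonal selection described above completes the proof.
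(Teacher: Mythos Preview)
The paper does not prove this statement. King's theorem is quoted in the introduction as motivation, with a citation to \cite{King1}; the paper's own work concerns a \emph{topological} analogue (Theorem~\ref{thm}) for rank-1 homeomorphisms of zero-dimensional Polish spaces, and its arguments are not set up to yield the measure-theoretic weak closure result. So there is no in-paper proof to compare your proposal against.

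As for the proposal itself: it is an accurate outline of the strategy, but it is not a proof. You correctly isolate the decisive step---showing that the distribution $p_n(j)=\mu(SB_n\cap T^jB_n)/\mu(B_n)$ concentrates, i.e.\ that $\max_j p_n(j)\to 1$---and you say in as many words that ``turning this rigidity into the quantitative bound $p_n(j_n)\to 1$ is the heart of King's argument.'' That is precisely the gap. The heuristic you offer (smearing of $SB_n$ propagates by $T$-commutation and would contradict bijectivity of $S$) does not by itself produce the bound: nothing obviously prevents $SB_n$ from meeting many levels in pieces of size $\approx \mu(B_n)/h_n$ while $S$ remains a perfectly good automorphism. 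King's actual argument requires a careful combinatorial analysis of how copies of the stage-$n$ tower sit inside the stage-$m$ tower (the ``column'' or ``name'' structure) to force this concentration, and that analysis is absent here. Until you supply it, the proposal remains a plan rather than a proof.
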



Some rank-1 measure-preserving transformations commute only with their integral powers (e.g., Chacon's transformation, see \cite{delJunco}), but this is not typical.  It is well known that a generic measure-preserving transformation $T$ is rigid, and thus  $\overline{\{T^i : i \in \Z\}}$ is uncountable.  Stepin and Ereminko showed in \cite{StepinEremenko} that every compact abelian group embeds into the centralizer of a generic measure-preserving transformation.  Glasner and Weiss showed in \cite{GlasnerWeiss} that for generic $T$, there is no spatial realization of $\overline{\{T^i : i \in \Z\}}$, which, by a theorem of Mackey, implies that $\overline{\{T^i : i \in \Z\}}$ is not locally compact.  Even more recently, Mellerey and Tsankov showed that for generic $T$, the group $\overline{\{T^i : i \in \Z\}}$ is extremely amenable.  As a generic measure-preserving transformation is rank-1, these results imply, in a very strong way, that for rank-1 transformations, $\overline{\{T^i: i \in \mathbb{Z}\}}$ typically contains much more than $\{T^i: i \in \mathbb{Z}\}$.


In this paper we introduce the notion of a rank-1 homeomorphism of a zero-dimensional Polish space $X$.  Our main result is that if such a rank-1 homeomorphism $f$ satisfies a certain non-degeneracy condition, it has trivial centralizer in the group of homeomorphisms of $X$, i.e., the centralizer of $f$ in the group Homeo$(X)$ equals $\{f^i : i \in \mathbb{Z}\}$.  

To motivate this, we will first describe how homeomorphisms can naturally be obtained from rank-1 measure-preserving transformations.  In the literature, there are several different definitions of rank-1 transformations; these are all equivalent if we restrict our attention to transformations that are totally ergodic, i.e. those $T$ for which $T^n$ is ergodic for each $n>0$.  A nice discussion of these several definitions and their interconnections can be found in the survey article \cite{Ferenczi}.  Below we mention two ways of defining rank-1 transformations and how one can realize rank-1 measure-preserving transformations as homeomorphisms of zero-dimensional Polish spaces.

One way of defining rank-1 transformations is via symbolic systems.  One first defines a collection of symbolic rank-1 systems.  Each such system is a triple $(X,\mu, \sigma)$, where $X$ is some closed, shift-invariant subset of $2^\mathbb{Z}$ with no isolated points, $\mu$ is a shift-invariant, non-atomic probability measure supported on $X$, and $\sigma$ is the shift.  A totally ergodic measure-preserving transformation is rank-1 if it is isomorphic to some symbolic rank-1 system.  For a symbolic rank-1 system, the shift $\sigma$ is not just a measure-preserving transformation, but also a homeomorphism of the Cantor space $X$.  King's theorem gives us information about the centralizer of $\sigma$ in the group Aut$(X,\mu)$, but one can also ask about the centralizer of $\sigma$ in the group Homeo$(X)$.  It is a consequence of our main theorem that $\sigma$ has trivial centralizer in the group Homeo$(X)$.

Another way of defining rank-1 transformations is via ``cutting and stacking" transformations.  In this case, one defines a collection $C$ of measure-preserving transformations, each obtained by a cutting and stacking construction with intervals.  A measure-preserving transformation is said to be rank-1 if it is isomorphic to some element of $C$.  Each $T \in C$ naturally gives rise to a homeomorphism as follows:  In the cutting and stacking construction of $T$, one removes from the interval $[0,1]$ both the point 0 and the point 1 and each cut-point of the cutting and stacking procedure.  By doing this, one removes a countable dense subset from the interval $(0,1)$, and thus what remains (call it $Y$) is homeomorphic to Baire space.  The transformation $T$ restricted to $Y$ is still a rank-1 measure-preserving transformation, but it is also a homeomorphism of $Y$.  It is a consequence of our main theorem that if $T \in C$ is totally ergodic, then the corresponding homeomorphism has trivial centralizer in the group Homeo$(Y)$.     

The natural topological analogue of ergodicity is this:  A homeomorphism $f$ of a Polish space $X$ is {\em transitive} if for non-empty open sets $U$ and $V$, there is some $i \in \Z$ so that $f^i(U)$ intersects $V$.  This clearly implies that for each non-empty open set $U$, the set $\bigcup_{i \in \mathbb{Z}} f^i(U)$ is co-meager in $X$.  A homeomorphism of $X$ is {\em totally transitive} if $f^n$ is transitive for each $n>0$.  The non-degeneracy condition of the theorem will be satisfied by any homeomorphism that is totally transitive (and some that are are not totally transitive).


It should be noted that in \cite{Bezuglyi} there is a definition given for a rank-1 homeomorphism of a Cantor space and several results are proved about homeomorphisms that satisfy that definition.  Their analysis is complementary to the analysis of this paper in that their definition for rank-1 is shown in their paper to be equivalent to being conjugate to an odometer, and such homeomorphisms do not satisfy the non-degeneracy condition that our theorem requires.

\section{Preliminaries}

While the definition given below is for any Polish space $X$, it should be noted that the existence of a rank-1 homeomorphism of $X$ implies, by condition (5), that $X$ has a basis of clopen sets, i.e., it is zero-dimensional.  

\begin{definition}
A homeomorphism $f$ of a Polish space $X$ is {\em rank-1} if there exists a sequence $\{B_n\}$ of strictly decreasing clopen sets and a sequence $\{h_n\}$ of strictly increasing positive integers so that: 

\begin{enumerate}
\item  the sets $B_n$, $f(B_n)$, \dots, $f^{h_n -1}(B_n)$ are pairwise disjoint;
\item  $\bigcup _{0 \leq i < h_n} f^i(B_n) \subseteq \bigcup _{0 \leq j < h_{n+1}} f^j(B_{n+1})$;
\item  if $f^j (B_{n+1}) \cap f^i (B_n) \neq \emptyset$ and $0 \leq j < h_{n+1}$ and $0 \leq i < h_n$, then $f^j (B_{n+1}) \subseteq f^i (B_n);$ 
\item  $f^{h_{n+1} -1} (B_{n+1}) \subseteq f^{h_n -1} (B_n)$; and
\item  the orbits of the sets $B_n$ under $f$ and orbits of the sets $L_n := X \setminus \bigcup_{0 \leq i < h_n} f^i (B_n)$ under $f$ form a basis for the topology of $X$.
\end{enumerate}
\end{definition}

We will first establish some terminology for rank-1 homeomorphisms.  If $f$ is a homeomorphism of a Polish space $X$, and the sequences $\{B_n\}$ and $\{h_n\}$ witness that $f$ is a rank-1 homeomorphism, then we call the pair $(\{B_n\}, \{h_n\})$ a {\em tower representation} of $f$.  Every rank-1 homeomorphism has multiple tower representations.  For example, if $(\{B_n\}, \{h_n\})$ is a tower representation of $f$, then by removing a single entry $B_k$ from the sequence $\{B_n\}$ and the corresponding entry $h_k$ from the sequence $\{h_n\}$, one obtains another tower representation of $f$.

Let $f$ be a rank-1 homeomorphism of a Polish space with a fixed tower representation $(\{B_n\}, \{h_n\})$.  For $n \in \N$ and $0 \leq i < h_n$, the set $f^i(B_n)$ is called the $i$-th level of the stage-$n$ tower.  We also call $B_n$ the {\em base} of the stage-$n$ tower and call $f^{h_n -1} (B_n)$ the {\em top} of the stage-$n$ tower.  We say that $h_n$ is the {\em height} of the stage-$n$ tower and call $L_n$ the {\em leftover piece} of the stage-$n$ tower. 

The definition of a rank-1 homeomorphism requires that each level of each tower be either a subset of some level of the stage-0 tower or a subset of $L_0$.  Borrowing terminology from the measure-preserving situation, we sometimes refer to levels of towers that are contained in $L_0$ as {\em spacers}.  Define $W_n\colon h_n \rightarrow \{0,1\}$ so that $W_n(i) = 0$ iff $f^i (B_n)$ is contained in some level of the stage-0 tower.  Since the sequences $\{B_n\}$ and $\{f^{h_n -1} (B_n)\}$ are each decreasing, each $W_{n+1}$ begins and ends with an occurrence of $W_n$.  In particular, each $W_{n+1}$ begins and ends with 0.  We can thus define $W_\infty \colon \N \rightarrow \{0,1\}$ so that for all $n\in \N$, $W_\infty (i) = W_n (i)$ whenever $W_n (i)$ is defined.  There are three possibilities:

\begin{enumerate}
\item  The sequence $W_\infty$ is periodic.  In this case, we say that the tower representation of $f$ is {\em repeating}.  One can show that in this case $f$ is not totally transitive.
  
Odometers form an important class of measure-preserving transformations, and each can be realized as a rank-1 homeomorphism of a Cantor space with a repeating tower representation (in fact, with $L_0 = \emptyset$).

\item  The sequence $W_\infty$ is not periodic, but there is a bound on the number of consecutive $1$s in $W_\infty$.  In this case, we say that the tower representation of $f$ is {\em non-repeating} and furthermore that it has bounded sequences of consecutive spacers.  

The symbolic version of Chacon's (measure-preserving) transformation is a homeomorphism of a Cantor space.  The natural choice for a tower representation of this homeomorphism witnesses that it is rank-1 and has bounded sequences of consecutive spacers.

\item  There are arbitrarily long sequences of consecutive $1$s in $W_\infty$.  In this case, we say the tower representation of $f$ is {\em non-repeating} and furthermore that it has arbitrarily long sequences of consecutive spacers.
\end{enumerate}

Our main result is the following theorem.

\begin{theorem}
\label{thm}
Let $f$ be a rank-1 homeomorphism of a Polish space $X$ with a non-repeating tower representation.  If $g$ is a homeomorphism of $X$ that commutes with $f$, then there is some $i \in \Z$ so that $f^i = g$.

\end{theorem}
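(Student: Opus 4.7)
Let $g \in \mathrm{Homeo}(X)$ commute with $f$, with $(\{B_n\}, \{h_n\})$ a fixed non-repeating tower representation. My aim is to produce an integer $i$ for which $g = f^i$. The key idea is to distill $g$ into a well-defined ``displacement'' of the tower structure and then use the non-repeating hypothesis to force that displacement to coincide with integer time.

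First, I would establish a discretization of $g$ relative to the towers. For each $m$, the levels $\{f^j(B_m) : 0 \le j < h_m\}$ together with $L_m$ form a clopen partition of $X$, and by condition (5) the collection of all orbit-translates $f^k(B_n)$ and $f^k(L_n)$ is a basis. Using continuity of $g$, I would show that for each $m$, for all sufficiently large $n$, the clopen set $g(B_n)$ is entirely contained in a single piece of the stage-$m$ partition, so that---passing to larger $m$ if necessary to avoid being trapped in $L_m$---one obtains a well-defined integer $\alpha(n,m)$ with $g(B_n) \subseteq f^{\alpha(n,m)}(B_m)$. Because $g$ commutes with $f$, this propagates immediately to $g(f^j(B_n)) \subseteq f^{j+\alpha(n,m)}(B_m)$ for every $0 \le j < h_n$, so the entire stage-$n$ tower is carried rigidly into a shifted block of the stage-$m$ tower.

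Second, I would show the displacements $\alpha(n,m)$ are coherent. Conditions (2)--(4) specify exactly how each stage-$(n+1)$ level sits inside the stage-$n$ tower; applying $g$ and comparing these nestings forces $\alpha(n',m')$ to agree with $\alpha(n,m)$ modulo the respective embeddings, yielding a single limit displacement. I would then invoke non-repeating to pin that displacement down: a non-trivial displacement at some scale translates, via the spacer-indicator $W_\infty$, into a non-trivial self-shift of $W_\infty$ onto itself, forcing a period of $W_\infty$ and contradicting the hypothesis. The upshot is that the displacements stabilize to a single integer $i$, giving $g(f^j(B_n)) = f^{j+i}(B_n)$ for all large $n$ and all $0 \le j < h_n$. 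Since $\{f^j(B_n)\}$ together with orbit-translates of $L_n$ form a basis, $g$ and $f^i$ agree on a basis of clopen sets, and continuity yields $g = f^i$ on all of $X$.

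The main obstacle is the step translating scale-dependent displacements into self-shifts of $W_\infty$ and confirming that non-repeating rules out every nontrivial displacement. I expect the trickiest case to be that of arbitrarily long sequences of consecutive spacers, since there the leftover pieces $L_n$ are not well controlled by the tower combinatorics, and so both the discretization of Step 1 (ensuring one actually lands in a level $f^{\alpha(n,m)}(B_m)$ rather than in an ever-thinner part of $L_m$) and the coherence argument of Step 2 must be carried out with greater care.
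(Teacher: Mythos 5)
Your plan founders at the very first step, the ``discretization'' claim that for each $m$ and all sufficiently large $n$ the set $g(B_n)$ lies in a single piece of the stage-$m$ partition. Continuity of $g$ cannot give this: the bases $B_n$ do not shrink to a point (indeed $B_\infty=\bigcap_n B_n$ is typically uncountable, e.g.\ in symbolic rank-1 systems it consists of all points whose right half is $W_\infty$ but whose left halves are arbitrary), the space need not be compact, and there is no control on diameters. Worse, the claim is simply false even for $g=f^{-1}$, which certainly commutes with $f$: in a non-repeating representation such as Chacon's ($W_{n+1}=W_nW_n1W_n$), an expected occurrence of $W_n$ is sometimes immediately preceded by the end of another $W_n$ and sometimes by a spacer, so $f^{-1}(B_n)$ meets both the top level of the stage-$m$ tower and $L_m$ for every large $n$; no single $\alpha(n,m)$ with $g(B_n)\subseteq f^{\alpha(n,m)}(B_m)$ exists. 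Relatedly, even where one containment $g(B_n)\subseteq f^{\alpha}(B_m)$ holds, pushing it up by $f^j$ only stays ``rigid'' for $j+\alpha<h_m$; beyond that $f^{j+\alpha}(B_m)$ is no longer a level of the stage-$m$ tower, so the stage-$n$ tower is not carried into a shifted block of the stage-$m$ tower, and your coherence step has nothing rigid to compare. The assertion that a nontrivial displacement forces a period of $W_\infty$ is the right intuition, but as stated it hides the actual content of the theorem: ruling out that $g$ displaces different parts of the space by different amounts.

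The paper proceeds quite differently, and its extra machinery is precisely what fills this hole. The only containment it extracts from topology is that the open set $g^{-1}(B_0)$ contains some level $f^k(B_n)$, so after replacing $g$ by $gf^k$ and re-indexing one may assume $g(B_1)\subseteq B_0$. The argument is then pointwise: for interior points $x$ one studies the return-time set $Z(x)=\{i: f^i(x)\in B_1\}$, shows $Z(x)$ determines $x$ (this uses the combinatorial proposition on expected occurrences of $W_n$, where non-repeating enters), builds an order-preserving bijection $\phi_x:Z(x)\to Z(g(x))$, proves the gap functions satisfy $\Psi_x=\Psi_{g(x)}\circ\phi_x$ (with separate arguments for bounded and unbounded spacers), and deduces $g(x)=f^{k(x)}(x)$ with $k(x)$ a priori depending on $x$. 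A final Baire-category argument (the closed sets $A_i=\{x:g(x)=f^i(x)\}$ cover a comeager set, so some $A_i$ contains an open set, and $f$-invariance plus Proposition \ref{prop:open} spread it to all of $X$) produces the single $i$. If you want to salvage your outline you would need to replace Step 1 by this weaker pointwise setup and supply the word-combinatorial rigidity (the analogue of the expected-occurrence proposition and the $\Psi$-equivariance lemma), plus a globalization argument; as written the proposal assumes the uniform rigidity it is supposed to prove.
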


Before we proceed with the general analysis that will lead us to the proof of the theorem, we prove a technical proposition about the words $W_n$ that come from a non-repeating tower representation of a rank-1 homeomorphism~$f$.  Let $(\{B_n\}, \{h_n\})$ be a non-repeating tower representation of a rank-1 homeomorphism $f$.  Let $m>n$ and consider $B_m$, $f(B_m)$, \dots, $f^{h_m -1} (B_m)$, the levels of the stage-$m$ tower.  

Condition (3) of the definition of rank-1 implies that each level of the stage-$m$ tower is either a subset of a level of the stage-$n$ tower or is contained in $L_n$.  Since $B_m \subseteq B_n$, level $i$ of the stage-$m$ tower is contained in level $i$ of the stage-$n$ tower, for $0 \leq i < h_n$.  Thus, $W_m$ begins with an occurrence of $W_n$.  

It is easy to see that either $f^{h_n} (B_m) \subseteq B_n$ or $f^{h_n} (B_m) \subseteq L_n$.  Indeed, if $f^{h_n} (B_n) \subseteq f^i (B_n)$ with $0 < i < h_n$, then $f^{h_n - i} (B_m) \subseteq B_n$, which contradicts the fact that $B_n$, $f(B_n)$, \dots, $f^{h_n -1} (B_n)$ are pairwise disjoint.  By similar reasoning, the smallest $j \geq h_n$ for which $f^j (B_m)$ is contained in some level of the stage-$n$ tower is such that $f^j (B_m) \subseteq B_n$.  For this $j$, if $h_n \leq k < j$, then $f^{k} (B_m) \subseteq L_n$.  Thus, the initial $W_n$ in $W_m$ is followed by $(j- h_n)$-many 1s and then another occurrence of $W_n$.  This pattern continues and $W_m$ can be viewed as a disjoint collection of occurrences of $W_n$ interspersed with 1s.  This is described more concretely below.

Let $E_{m,n} = \{i \in [0, h_m) : f^i (B_m) \subseteq B_n \}$.  It is clear that if $i \in E_{m,n}$, then $W_m$ has an occurrence of $W_n$ beginning at position $i$.  Such an occurrence of $W_n$ in $W_m$ is called {\em expected}.  The arguments in the preceding paragraph show that each 0 in $W_m$ is a part of exactly one expected occurrence of $W_n$.  In particular, if $i$ and $j$ are consecutive elements of $E_{m,n}$ and $i + h_n\leq k < j$, then $W_m (k) = 1$.  

It is possible for $W_m$ to have unexpected occurrences of $W_n$.  For example, if $W_{n+1} = W_n W_n 1 W_n W_n$, for all $n>0$, then for $0<n<m$, there is at least one unexpected occurrence of $W_n$ in $W_m$.  However, the following proposition guarantees that knowing a sufficiently large subword of $W_m$ that begins with a specified occurrence of $W_n$ is enough to determine whether that specified occurrence of $W_n$ is expected (and ``large enough" is independent of $m$).  


For the proposition below, and the lemma that follows it, recall that we are working with a rank-1 homeomorphism $f$ of a Polish space $X$ that has a non-repeating tower representation $(\{B_n\}, \{h_n\})$.

\begin{proposition}
\label{prop:expected}

For any $n \in \mathbb{N}$, there is some $l(n) \in \mathbb{N}$ so that for any $m>n$, if $s$ is a subword of length $l(n)$ of $W_m$ that begins with an expected occurrence of $W_n$, then every occurrence of $s$ in $W_m$ begins with an expected occurrence of $W_n$.

\end{proposition}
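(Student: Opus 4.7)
My plan is to argue by contradiction, extracting via a compactness argument a limiting infinite match and then using the non-repeating hypothesis to force $W_\infty$ to be periodic.  Suppose that for the given $n$ no finite $l(n)$ works.  Then for each positive integer $l$ there exist $m > n$ and positions $p_1, p_2$ in $W_m$ with $p_1 \in E_{m,n}$, $p_2 \notin E_{m,n}$, $W_m[p_2, p_2+h_n) = W_n$, and $W_m[p_1, p_1+l) = W_m[p_2, p_2+l)$.  Let $q$ be the largest element of $E_{m,n}$ with $q \le p_2$, and set $d := p_2 - q \in [1, h_n)$.  Since $d$ can take only finitely many values, by pigeonhole I pass to a subsequence of $l$'s along which $d$ takes a single fixed value.

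The next step is to analyse the first few $W_n$-blocks of the match in order to extract structural constraints on $W_n$ and on the level-$n$ gap sequence $\bar{g}^{(n)} = (g_1, g_2, \ldots)$ of $W_\infty$.  Since $q \in E_{m,n}$ gives $W_m[q, q + h_n) = W_n$, matching the first $h_n - d$ characters forces $W_n[0, h_n - d) = W_n[d, h_n)$, so that $W_n$ has period $d$ on $[0, h_n - d)$.  Reading off the next $d$ characters: if the gap $g$ after the expected $W_n$ at $q$ were at least $d$, these would all be spacers and would force $W_n(h_n - 1) = 1$, contradicting that $W_n$ ends in $0$.  Hence $g < d$, and one also obtains $W_n[h_n - d, h_n - d + g) = 1^g$ together with $W_n[h_n - d + g, h_n) = W_n[0, d - g)$.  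Extending this analysis block by block, each additional full $W_n$-block visible in the match pins down one more entry of $\bar{g}^{(n)}$: a match of length roughly $T(h_n + d)$ will force $T$ consecutive entries of $\bar{g}^{(n)}$, both at the positions corresponding to $p_1$ and at those corresponding to $p_2$, to take a single value determined by $d$ and the internal structure of $W_n$.

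The main obstacle, which I expect to be the hardest part, is converting arbitrarily long runs of forced equalities into a contradiction with non-repeating.  The plan is a compactness argument: along the subsequence I extract a limit point in $\{0,1\}^{\mathbb{N}}$ of each of the two one-sided tails of $W_\infty$ beginning at $p_1$ and at $p_2$.  Because these tails agree on initial segments whose length tends to infinity, they share a common subsequential limit $\tau \in \{0,1\}^{\mathbb{N}}$.  The block-by-block forcing from the previous paragraph propagates to $\tau$ and shows that $\tau$ is periodic with period $h_n + d$; combined with the self-similar structure of the tower representation (each $W_{n'}$ for $n' > n$ is a concatenation of $W_n$-blocks separated by level-$n$ gaps, and each $W_{n'}$ is a prefix of $W_\infty$), this periodicity of $\tau$ lifts to periodicity of $W_\infty$ itself, contradicting the non-repeating hypothesis.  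The delicate step is precisely this lifting, which requires carefully using the hierarchical rank-1 structure to turn local limit-periodicity into a global periodicity statement about $W_\infty$.
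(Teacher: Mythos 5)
Your opening moves (argue by contradiction, locate the expected occurrence at $q$ containing the start of the unexpected one, analyze the overlap to get $W_n[0,h_n-d)=W_n[d,h_n)$ and $g<d$) are sound, but the sentence ``extending this analysis block by block \ldots will force $T$ consecutive entries of $\bar{g}^{(n)}$ \ldots to take a single value'' is exactly where the real combinatorial content lives, and you have not supplied it. To iterate you must know that the gap you read in the matched window immediately after the unexpected $W_n$-block (which is the true level-$n$ gap $r_1$ at the $p_1$-side) equals the true gap $g$ after the expected occurrence at $q$; your first-block analysis gives the period structure and $g<d$ but not this equality. Without it, the offset of the next apparent block from the next expected occurrence is $d+r_1-g$ rather than $d$, the pigeonholed value of $d$ controls only the first block, and no constancy of gaps follows. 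In the paper this is precisely Lemma \ref{lemma:expected}, proved by a word-combinatorial trick: write the unexpected occurrence as $\alpha 1^{g}\beta$ with $\alpha$ a suffix and $\beta$ a prefix of $W_n$, count the 0s to conclude $W_n=\beta 1^{g}\alpha$, and compare with $\beta 1^{r}\alpha'$ (both $\alpha$ and $\alpha'$ start with 0) to get $r=g$; moreover, getting constancy $r_1=r_2=\cdots$ requires applying that lemma in both directions (the expected occurrences near $p_2$, read back in the window at $p_1$, give unexpected occurrences nested in the expected ones there). None of this is in your sketch, so the induction does not close.

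The endgame is also problematic. Even granting arbitrarily long stretches of constant level-$n$ gaps, your compactness step does not yield a contradiction: periodicity of a subsequential limit $\tau$ of tails of $W_\infty$ is perfectly compatible with $W_\infty$ being non-periodic --- for instance, in the case of arbitrarily long runs of spacers the constant sequence of 1s is already such a limit. The ``delicate lifting'' you defer is the missing idea, and it is where the paper does something finite and different: since $W_\infty$ is not periodic, fix $k>n$ so that the gaps between consecutive expected occurrences of $W_n$ \emph{inside $W_k$} are not constant, set $l(n)=2h_k+h_n$, and note that any occurrence of $s$ begins with a 0, hence sits inside the expected $W_k$-decomposition of $W_m$ and, by its length, contains a complete expected occurrence of $W_k$; the constancy of gaps inside $s$ then forces the gaps inside $W_k$ to be constant, contradicting the choice of $k$. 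Some device of this kind, tying the long constant-gap windows back to a fixed prefix word $W_k$ via the hierarchical structure, is indispensable; the passage to a limit $\tau$ by itself tells you nothing about $W_\infty$. (A minor point: the period you would extract is $h_n+r$ with $r$ the common gap value, which is less than $d$, not $h_n+d$.)
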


To prove this proposition, we need the following lemma.

\begin{lemma}
\label{lemma:expected}
Suppose $W_m$ has an expected occurrence of $W_n$ that begins at $i$ and is followed by $r$ 1s and then another expected occurrence of $W_n$.  Suppose further that $i < j < i + h_n$ and that $W_m$ also has a (necessarily unexpected) occurrence of $W_n$ that begins at $j$ that is followed by $s$ 1s and then another occurrence of $W_n$.  Then $r=s$. 
\end{lemma}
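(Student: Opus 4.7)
The plan is to prove the lemma by reading the unexpected occurrence of $W_n$ at $j$ letter-by-letter against the expected structure $W_n \cdot 1^r \cdot W_n$ present at $i$, deriving combinatorial identities for $W_n$, and then pinning down $s = r$ by a two-sided bound. Set $d := j - i$, so $0 < d < h_n$. Because $W_n$ ends in $0$ and every $0$ of $W_m$ sits inside some expected occurrence of $W_n$, the final letter of the unexpected $W_n$ at $j$ cannot fall inside the $1$-run of length $r$; this forces $d > r$. Matching the unexpected $W_n$ letter-by-letter with the expected pattern then produces two identities on $W_n$: a border identity $W_n(k) = W_n(k + d)$ for $0 \leq k < h_n - d$, and the block identity $W_n(k) = 1$ for $h_n - d \leq k < h_n - d + r$, the latter coming from the stretch that crosses the $1$-run. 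Applying the border identity at $k = 0$ gives $W_n(d) = W_n(0) = 0$.

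For the upper bound $s \leq r$, the $s$ ones following the unexpected $W_n$ occupy positions that read from the second expected $W_n$ at internal offsets $d - r, d - r + 1, \ldots, d - r + s - 1$, yielding $W_n(d - r + t) = 1$ for all $0 \leq t < s$. If $s > r$, taking $t = r$ forces $W_n(d) = 1$, contradicting $W_n(d) = 0$. From $s \leq r < h_n - d + r$ it follows automatically that the next occurrence of $W_n$ in structure (b) begins at a well-defined internal offset $e := d - r + s \in (0, h_n)$ inside the second expected $W_n$.

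For the lower bound $s \geq r$, I would assume toward contradiction that $s < r$ and view the next $W_n$ as an unexpected occurrence at offset $e$ of the second expected $W_n$. The same letter-by-letter matching gives a border identity $W_n(k) = W_n(k + e)$ for $0 \leq k < h_n - e$. Taking $k = h_n - e - 1$ yields $W_n(h_n - e - 1) = W_n(h_n - 1) = 0$. But $h_n - e - 1 = h_n - d + r - s - 1$ lies in $[h_n - d, h_n - d + r)$ precisely when $0 \leq s < r$, and every position in that interval is a $1$ by the block identity. This contradiction forces $s = r$. The main obstacle is the index-tracking bookkeeping: one must verify at each step that the various $W_n$ occurrences really sit where claimed inside the expected $W_n$s (and in particular do not overflow the second expected $W_n$), so that both applications of the border identity can be made at precisely the index that lands inside the forced $1$-block.
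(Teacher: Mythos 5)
Your argument is correct; every index you use does land where you claim it does (in particular the only instance of your ``$W_n(d-r+t)=1$ for $0\le t<s$'' that you actually invoke is $t=r$, whose offset $d<h_n$ stays inside the second expected occurrence, so the mild overstatement for large $t$ is harmless, as is the empty block when $r=0$, since then the case $s<r$ never arises). Your route differs from the paper's in its concluding mechanism. The paper names the overlap words: $\alpha$ (from $j$ to $i+h_n-1$) and $\beta$ (from $i+h_n+r$ to $j+h_n-1$), reads the unexpected occurrence as $W_n=\alpha 1^r\beta$, notes that $W_n$ ends with $\alpha$ and begins with $\beta$, and then \emph{counts the zeros} to conclude the conjugate identity $W_n=\beta 1^r\alpha$; comparing this with the decomposition $\beta 1^s\alpha'$ of the second expected occurrence gives $r=s$ in a single stroke, because $\alpha$ and $\alpha'$ both begin with $0$. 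You never form $W_n=\beta 1^r\alpha$: instead you extract pointwise consequences (the border identity $W_n(k)=W_n(k+d)$, the forced block of $1$s at offsets $[h_n-d,\,h_n-d+r)$) and prove the two inequalities separately, getting $s\le r$ from $W_n(d)=W_n(0)=0$ and $s\ge r$ from a second border identity for the fourth occurrence combined with $W_n(h_n-1)=0$. The ingredients are the same (that $W_n$ begins and ends with $0$, plus the location of the known $1$-runs), but the paper's zero-counting step buys a shorter, one-comparison finish, while your version is more elementary bookkeeping that is straightforward to verify position by position and sidesteps the counting argument entirely.
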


\begin{proof}
There are four known occurrences of $W_n$, beginning at $i$, $i + h_n +r$, $j$, and $j + h_n + s$.  Let $\alpha$ be the subword of $W_m$ beginning at $j$ and ending at $i + h_n - 1$; $\alpha$ has length between 1 and $h_n -1$, inclusive.  Let $a$ be the number of 0s in $\alpha$ (thus, $a > 0$).  Let $\beta$ be the subword of $W_m$ beginning at $i + h_n + r$ and ending at $j + h_n -1$; $\beta$ has length between 1 and $h_n -1 - r$, inclusive.  Let $b$ be the number of 0s in $\beta$ (thus, $b > 0 $).  Notice that the occurrence of $W_n$ that begins at $j$ consists exactly of $\alpha 1^r \beta$, so there are exactly $a + b$ 0s in $W_n$.  Notice also that the occurrence of $W_n$ that begins at $i$ ends with $\alpha$ and so the word $W_n$ must end with $\alpha$.  Notice also that the occurrence of $W_n$ that begins at $i + h_n + r$ begins with $\beta$ and so the word $W_n$ must begin with $\beta$.  By counting the number of 0s in $W_n$, we see that $W_n$ can also be expressed as $\beta 1^r \alpha$.  In particular, the $W_n$ that begins at $i + h_n + r$ has the form $\beta 1^r \alpha$ and also the form $\beta 1^s \alpha ^\prime$, where $\alpha^\prime$ is an initial segment of the occurrence of $W_n$ that begins at $j + h_n + s$.  Since $\beta 1^r \alpha = \beta 1^s \alpha ^\prime$ and both $\alpha$ and $\alpha ^\prime$ begin with 0, $r=s$.
\end{proof}

We now give the proof of Proposition \ref{prop:expected}.

\begin{proof}
Suppose $n\in \mathbb{N}$ is such that for each $l \in \mathbb{N}$ there is some $m \in \mathbb{N}$ and a subword $s$ of $W_m$ of length $l$ that begins with an expected occurrence of $W_n$, so that there is also an occurrence of $s$ in $W_m$ that does not begin with an expected occurrence of $W_n$.

Since $W_\infty$ is not periodic, there is some $k > n$ so that the number of 1s that separate expected occurrences of $W_n$ in $W_k$ is not constant.  Let $l = 2h_k + h_n$.  Find $m \in \mathbb{N}$ and $s$ a subword of length $l$ of $W_m$ that begins with an expected occurrence of $W_n$, and so that $W_m$ has an occurrence of $s$ that does not begin with an expected occurrence of $W_n$.  

First consider the occurrence of $s$ in $W_m$ that begins with an expected occurrence of $W_n$.  Because of the regularity with which expected occurrences of $W_n$ appear in $W_m$, $s$ can be written as $W_n 1^{r_1} W_n 1 ^{r_2} \ldots W_n 1^{r_t} A$, where $A$ is a proper initial segment (possibly empty) of $W_n$.

Now consider the occurrence of $s$ in $W_m$ that does not begin with an expected occurrence of $W_n$ (say it begins at position $i$ in $W_m$).  Then there are occurrences of $W_n$ that begin at positions $i$, $i + h_n + r_1$, $i + 2h_n + r_1 + r_2$, \dots, $i + (t-1)h_n + (r_1 + \dots + r_{t-1})$.  It is easy to check that since $W_n$ begins and ends with 0, none of these occurrences of $W_n$ are expected in $W_m$ and, moreover, that each of them intersects exactly two expected occurrences of $W_n$ in $W_m$.  Repeated application of Lemma \ref{lemma:expected} shows that $r_1 = r_2 = \ldots = r_{t-1}$ and, moreover, that if two occurrences of $W_n$ are completely contained in $s$ and separated only by 1s, then they are separated by exactly $r_1$-many 1s.  

Also, since there is an expected occurrence of $W_n$ that contains the 0 at position $i + h_n -1$ and that expected occurrence of $W_n$ ends with 0, we know that $r_1 < h_n$.  This clearly implies that in $s$ there is no consecutive sequence of 1s with length $h_n$.  

Any occurrence of $s$ in $W_m$ begins with a 0, which is a part of some expected occurrence of $W_k$.  The length of $s$ is large enough to guarantee that this occurrence of $s$ will completely contain the next expected occurrence of $W_k$ in $W_m$.  This implies that each pair of consecutive expected occurrences of $W_n$ in $W_k$ are separated by exactly $r_1$-many 1s.  This contradicts the choice of $k$.
\end{proof}

\section{Initial Analysis}
Let $f$ be a rank-1 homeomorphism of a Polish space $X$ with a non-repeating tower representation $(\{B_n\}, \{h_n\})$.

\begin{proposition}
Each level of the stage-$n$ tower is a disjoint union of at least two levels of the stage-$(n+1)$ tower.
\end{proposition}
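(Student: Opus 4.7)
The plan is to show two things: that the number $c_i$ of stage-$(n+1)$ levels comprising $f^i(B_n)$ is the same for every $i \in [0, h_n)$, and that $c_0 \geq 2$.

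First, I would note that combining conditions (2) and (3) guarantees that each level $f^i(B_n)$ decomposes as a disjoint union of stage-$(n+1)$ levels. Condition (2) places $f^i(B_n)$ inside $\bigcup_{0 \leq k < h_{n+1}} f^k(B_{n+1})$, and condition (3) ensures that any stage-$(n+1)$ level meeting $f^i(B_n)$ is in fact contained in it. So we may write $f^i(B_n) = \bigsqcup_{j \in S_i} f^j(B_{n+1})$ for some $S_i \subseteq [0, h_{n+1})$, and set $c_i := |S_i|$.

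Next, I would argue that $c_i = c_{i+1}$ whenever $0 \leq i < h_n - 1$. Applying $f$ to the decomposition yields $f^{i+1}(B_n) = \bigsqcup_{j \in S_i} f^{j+1}(B_{n+1})$. To conclude that $S_{i+1} = \{j+1 : j \in S_i\}$, and hence $c_i = c_{i+1}$, one needs each $f^{j+1}(B_{n+1})$ to genuinely be a stage-$(n+1)$ level, i.e., $j+1 < h_{n+1}$. If not, then $h_{n+1}-1 \in S_i$, so $f^{h_{n+1}-1}(B_{n+1}) \subseteq f^i(B_n)$; but condition (4) places $f^{h_{n+1}-1}(B_{n+1})$ inside $f^{h_n-1}(B_n)$, and condition (1) then forces $i = h_n - 1$, contradicting our assumption. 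This single edge-case check is the only delicate step, and condition (4) is exactly what handles it.

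Finally, for the base ($i = 0$), the strict inclusion $B_{n+1} \subsetneq B_n$ built into the definition does the job: $B_{n+1}$ is itself one of the stage-$(n+1)$ levels contained in $B_n$, while the nonempty remainder $B_n \setminus B_{n+1}$ must be covered by at least one additional stage-$(n+1)$ level in $B_n$, so $c_0 \geq 2$. Combined with the previous step, this yields $c_i \geq 2$ for every $i \in [0, h_n)$, completing the argument. I don't anticipate any real obstacle here; the proof is essentially a symmetry observation (translating by $f$) that reduces the claim to the base, where the strictness of $\{B_n\}$ provides the needed second summand.
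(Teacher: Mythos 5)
Your proof is correct, but it takes a more roundabout route than the paper's. The first part --- using conditions (2) and (3) to write each $f^i(B_n)$ as a disjoint union of stage-$(n+1)$ levels --- is the same. Where you diverge is in producing the second summand: you show the count $c_i$ is invariant under translation by $f$ (with the top-level edge case handled via conditions (4) and (1)) and then verify only the base, whereas the paper observes that the strict inclusion $B_{n+1}\subsetneq B_n$ propagates to every level at once: since $f^i$ is injective and $i < h_n < h_{n+1}$, the set $f^i(B_{n+1})$ is itself a stage-$(n+1)$ level strictly contained in $f^i(B_n)$, so the nonempty remainder must be covered by at least one further level. That one observation makes your translation step, and hence the appeal to condition (4), unnecessary. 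On the other hand, your argument is sound (the edge case does need the $B_m$ to be nonempty, which holds because an infinite strictly decreasing sequence of sets can contain no empty term) and it yields the slightly stronger conclusion that all levels of the stage-$n$ tower decompose into the \emph{same} number of stage-$(n+1)$ levels, which mirrors the cutting-and-stacking picture; the paper's version is simply shorter and needs fewer hypotheses from the definition.
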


\begin{proof}
Let $0 \leq i <h_n$ and consider $f^i (B_n)$, level $i$ of the stage-$n$ tower.  If $x\in f^i (B_n)$, then for some $0 \leq j < h_{n+1}$, $x \in f^j (B_{n+1})$, which implies that $f^j (B_{n+1}) \subseteq f^i (B_n)$.  Since the levels of the stage-$(n+1)$ tower are pairwise disjoint, $f^i(B_n)$ is a disjoint union of some levels of the stage-$(n+1)$ tower.  To see that $f^i(B_n)$ contains at least two levels of the stage-$(n+1)$ tower, notice that $B_{n+1} \subsetneq B_n$, which guarantees that $f^i (B_{n+1}) \subsetneq f^i(B_n)$.
\end{proof}

\begin{proposition}
\label{prop:opentower}
Every nonempty open set contains a level of a tower.
\end{proposition}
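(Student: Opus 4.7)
The plan is to invoke condition (5) to find a basic open set $V \subseteq U$ of one of the two forms $f^i(B_n)$ or $f^i(L_n)$ (for some $n \in \mathbb{N}$ and $i \in \mathbb{Z}$), and then produce a level of some stage-$m$ tower inside $V$. The two kinds of basic sets will be handled by separate arguments.

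For $V = f^i(B_n)$, I would exploit the ``expected occurrences'' from the discussion just before Proposition \ref{prop:expected}. Recall $E_{m,n} = \{e \in [0, h_m) : f^e(B_m) \subseteq B_n\}$; any $e \in E_{m,n}$ gives $f^{e+i}(B_m) \subseteq f^i(B_n) = V$, and this image is a level of the stage-$m$ tower precisely when $e + i \in [0, h_m)$. Now $0 \in E_{m,n}$ (since $B_m \subseteq B_n$) and $h_m - h_n \in E_{m,n}$ (by iterating condition (4), which gives $f^{h_m - 1}(B_m) \subseteq f^{h_n - 1}(B_n)$), so for $m$ large the choice $e = 0$ (if $i \geq 0$) or $e = h_m - h_n$ (if $i < 0$) lands $e + i$ in $[0, h_m)$ and yields the desired level.

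For $V = f^i(L_n)$, the crucial step is to produce $m_0 > n$ and $j_0 \in [0, h_{m_0})$ with $f^{j_0}(B_{m_0}) \subseteq L_n$. Suppose no such pair exists; then for every $m > n$ and every $j \in [0, h_m)$, $f^j(B_m)$ is contained in some level of the stage-$n$ tower. Starting from $B_m \subseteq B_n$ and pushing forward by $f$, one gets $f^r(B_m) \subseteq f^r(B_n)$ for each $0 \leq r < h_n$. Avoiding $L_n$ at position $h_n$ forces $f^{h_n}(B_m) \subseteq B_n$: any other containment $f^{h_n}(B_m) \subseteq f^i(B_n)$ with $0 < i < h_n$ would, on applying $f^{-i}$, give $f^{h_n - i}(B_m) \subseteq B_n$; combined with $f^{h_n - i}(B_m) \subseteq f^{h_n - i}(B_n)$, the nonempty set $f^{h_n - i}(B_m)$ would lie in $B_n \cap f^{h_n - i}(B_n) = \emptyset$ (by condition (1)), a contradiction. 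Iterating, $W_m$ would be the concatenation $W_n^{h_m/h_n}$ with no intervening spacers; since this would hold for every $m > n$, $W_\infty$ would be periodic of period $h_n$, contradicting the non-repeating hypothesis. Once $f^{j_0}(B_{m_0}) \subseteq L_n$ is in hand, the same kind of index-chasing as in the first case, now using $E_{m', m_0}$ for $m' > m_0$ sufficiently large, produces a level $f^{e + j_0 + i}(B_{m'})$ of the stage-$m'$ tower contained in $V$.

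The principal obstacle is the sub-claim inside the second case that $L_n$ contains a level of some tower; this is the only place the non-repeating hypothesis is used. Everything else reduces to careful bookkeeping with the expected-occurrence structure already established in Section~2.
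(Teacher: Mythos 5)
Your proof is correct, and its skeleton matches the paper's: reduce via condition (5) to the basic clopen sets $f^i(B_n)$ and $f^i(L_n)$, and observe that the only place the non-repeating hypothesis enters is in showing that $L_n$ contains a level of some higher tower --- your argument there (if every level of every stage-$m$ tower avoided $L_n$, then $W_m = W_n^{h_m/h_n}$ for all $m>n$, so $W_\infty$ would be periodic) is exactly the paper's. Where you diverge is in how the shift by $f^i$ is absorbed. The paper proves a one-step lemma: if a set $A$ contains a level of some tower, then so do $f(A)$ and $f^{-1}(A)$; the only nontrivial case is the top of a stage-$m$ tower, whose image contains a level of the stage-$(m+1)$ tower because the top contains at least two stage-$(m+1)$ levels and at most one of them is the new top. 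Iterating this handles arbitrary $i$, and then one only needs that $B_n$ is itself a level and that $L_n$ contains one. You instead do the shift in one stroke by exploiting that $0$ and $h_m-h_n$ both lie in $E_{m,n}$ (the latter from iterating condition (4)), choosing $m$ so large that the translated index $e+i$ (or $e+j_0+i$ in the $L_n$ case) stays inside $[0,h_m)$. Both routes are sound; the paper's image/preimage lemma is a slicker reusable step and avoids index bookkeeping, while your version is more explicit and makes the role of the expected-occurrence positions at the two ends of the tower transparent.
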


\begin{proof}
It suffices to show that for each $n\in \N$ and each $i \in \Z$, each of the sets $f^i (B_n)$ and $f^i (L_n)$ contains a level of some tower.  

The image, under $f$, of a non-top level of the stage-$m$ tower is a level of the stage-$m$ tower.  The top of the stage-$m$ tower contains at least two levels of the stage-$(m+1)$ tower, and only one of these can be the top of the stage-$(m+1)$.  Thus, the image, under $f$, of the top of the stage-$m$ tower contains a level of the stage-$(m+1)$ tower.  Similarly, the pre-image under $f$ of any level of the stage-$m$ tower contains a level of the stage-$(m+1)$ tower.  We thus have the following: if $A$ contains a level of a tower, then so does each of $f(A)$ and $f^{-1} (A)$.  It now suffices to show that each $L_n$ contains a level of some tower.

If $m>n$, then each level of the stage-$m$ tower is either contained in $L_n$ or is disjoint from $L_n$.  If for every $m>n$, every level of the stage-$m$ tower is disjoint from $L_n$, then $W_\infty = W_n W_n W_n \dots$, which contradicts the fact that $(\{B_n\}, \{h_n\})$ is a non-repeating tower representation of $f$.  Therefore, $L_n$ contains some level of some tower.
\end{proof}

Let $B_\infty = \bigcap B_n$,  $T_\infty = \bigcap f^{h_n -1}(B_n)$, and $L_\infty = \bigcap L_n$.  We say that a point $x \in X$ is an {\em interior point} (with respect to $f$ and  $(\{B_n, h_n\})$, its tower representation) if no point in the orbit of $x$ is in $B_\infty$, $T_\infty$, or $L_\infty$.  Interior points are relatively simple to deal with and will play a crucial role in the proof of the main theorem.  

\begin{proposition}
\label{prop:interior}
The set of interior points is comeager in $X$.
\end{proposition}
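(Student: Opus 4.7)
The plan is to show that the complement of the interior-point set is meager in $X$. A point $x$ fails to be interior exactly when some $f^k(x)$ lies in $B_\infty \cup T_\infty \cup L_\infty$, so the set of non-interior points equals
\[
\bigcup_{k \in \Z} f^k\bigl(B_\infty \cup T_\infty \cup L_\infty\bigr).
\]
Since $f$ is a homeomorphism and the meager ideal is closed under countable unions, it will suffice to verify that each of $B_\infty$, $T_\infty$, and $L_\infty$ is meager.

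Each of the three sets is closed: the $B_n$ and $f^{h_n-1}(B_n)$ are clopen by hypothesis, and $L_n = X \setminus \bigcup_{0 \leq i < h_n} f^i(B_n)$ is the complement of a finite union of clopen sets and hence clopen as well, so $B_\infty$, $T_\infty$, $L_\infty$ are all intersections of clopen sets. It therefore suffices to show each has empty interior. The key tool will be Proposition~\ref{prop:opentower}: any nonempty open set contains a level $f^j(B_m)$ of some tower (with $0 \leq j < h_m$), so I will derive a contradiction from each of the three hypothetical inclusions $f^j(B_m) \subseteq B_\infty$, $\subseteq T_\infty$, $\subseteq L_\infty$.

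For $B_\infty$: the inclusion $f^j(B_m) \subseteq B_\infty \subseteq B_{m+1} \subsetneq B_m$ forces $j = 0$ by condition~(1) (the levels of the stage-$m$ tower are pairwise disjoint, so $f^j(B_m)$ meets $B_m$ only when $j = 0$), and then $B_m \subseteq B_{m+1}$ contradicts the strict decrease of $\{B_n\}$. For $L_\infty$: the inclusion $f^j(B_m) \subseteq L_m$ is incompatible with $f^j(B_m)$ being a level of the stage-$m$ tower, hence disjoint from $L_m$ by definition. For $T_\infty$: the same template applies, with pairwise-disjointness inside the stage-$m$ tower forcing $j = h_m - 1$; the contradiction then needs that $\{f^{h_n - 1}(B_n)\}$ be strictly decreasing, which requires a short check: an equality $f^{h_{m+1}-1}(B_{m+1}) = f^{h_m - 1}(B_m)$ would give $B_m = f^{h_{m+1} - h_m}(B_{m+1})$, and since $h_{m+1} > h_m$ this realizes $B_m$ as a level of the stage-$(m+1)$ tower distinct from $B_{m+1}$, hence disjoint from $B_{m+1}$, contradicting $B_{m+1} \subseteq B_m$.

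The substance of the argument is already packaged in Proposition~\ref{prop:opentower}; once that is available, everything reduces to elementary bookkeeping with conditions~(1)--(4) of the rank-$1$ definition. The only mild subtlety is the strict-decrease verification for the $T_\infty$ case, but that follows cleanly from the strict increase of $\{h_n\}$ together with the pairwise-disjointness of levels.
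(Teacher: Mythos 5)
Your proof is correct and takes essentially the same route as the paper: reduce to showing that the closed sets $B_\infty$, $T_\infty$, $L_\infty$ are nowhere dense (their $f$-orbits covering the non-interior points) and invoke Proposition \ref{prop:opentower}. The only, harmless, difference is in the last step: you rule out a tower level being contained in each limit set by contradiction (using strictness of $\{B_n\}$ together with your check that the tops strictly decrease), whereas the paper instead produces inside any nonempty open set a non-base (resp.\ non-top) level disjoint from $B_\infty$ (resp.\ $T_\infty$), using the fact that each level splits into at least two levels of the next stage.
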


\begin{proof}
It suffices to show that each of the sets $L_\infty$, $B_\infty$, and $T_\infty$ are nowhere dense.  That $L_\infty$ is nowhere dense is an immediate consequence of the previous proposition.  We now prove that $B_\infty$ is nowhere dense.  (The proof that $T_\infty$ is nowhere dense is essentially the same.)

Suppose $U$ is a non-empty open set.  We need to show that there is some non-empty open $V \subseteq U$ that does not intersect $B_\infty$.  By the previous proposition, $U$ contains a level of a tower.  If it is a level that is not the base, then we can set $V$ equal to that level.  If, on the other hand, there is a base of a tower, say $B_n$, contained in $U$, then we know that $B_n$ is the disjoint union of at least two levels of the stage-$(n+1)$ tower.  As only one of those can be the base of the stage-$(n+1)$ tower, another of them can be chosen for $V$.  In any case, we have $V$, a non-base level of a tower.  The clopen set $V$ is disjoint from $B_\infty$.
\end{proof}

\begin{proposition}
\label{prop:interiork}
Let $x$ be an interior point.  Then for each $k \in \mathbb{N}$ there is some $N \in \mathbb{N}$ so that for all $n>N$, there is some $i$ satisfying $k \leq i < h_n - k$ and $x \in f^i (B_n)$.
\end{proposition}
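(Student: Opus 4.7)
The plan is to exploit the nesting structure built into a tower representation. First I observe that the three sequences $\{B_n\}$, $\{f^{h_n-1}(B_n)\}$, and $\{L_n\}$ are each decreasing: $\{B_n\}$ is decreasing by hypothesis, the tops $\{f^{h_n-1}(B_n)\}$ decrease by condition (4) of the definition, and $\{L_n\}$ decreases because condition (2) says the orbit of $B_n$ grows with $n$, so the complement $L_n$ shrinks. Since $x$ is an interior point, in particular $x \notin L_\infty = \bigcap L_n$, and monotonicity then yields some $N_0$ with $x \notin L_n$ for every $n > N_0$. For each such $n$ the point $x$ lies in a unique level $f^{i_n}(B_n)$ of the stage-$n$ tower, with $0 \le i_n < h_n$.

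Fix $k \in \mathbb{N}$. I would then argue by contradiction that eventually $k \le i_n < h_n - k$. Suppose first that $i_n < k$ for infinitely many $n$. By pigeonhole there is a fixed $j \in [0,k)$ with $i_n = j$ for infinitely many $n$, which means $f^{-j}(x) \in B_n$ for arbitrarily large $n$; since $\{B_n\}$ is nested decreasing, $f^{-j}(x)$ in fact lies in $B_n$ for every $n$, so $f^{-j}(x) \in B_\infty$. But $f^{-j}(x)$ belongs to the orbit of $x$, contradicting the assumption that $x$ is interior. Symmetrically, if $i_n \ge h_n - k$ for infinitely many $n$, write $x = f^{i_n}(y_n)$ with $y_n \in B_n$; then $f^{h_n - 1 - i_n}(x) = f^{h_n-1}(y_n) \in f^{h_n - 1}(B_n)$, with $0 \le h_n - 1 - i_n < k$. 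Pigeonhole together with the nesting of the tops then forces $f^{j'}(x) \in T_\infty$ for some $j' \in [0,k)$, again contradicting interiority. Combining the two cases gives some $N \ge N_0$ such that $k \le i_n < h_n - k$ for every $n > N$, which is the desired conclusion.

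The whole argument is essentially pigeonhole applied to the three decreasing families $\{B_n\}$, $\{f^{h_n-1}(B_n)\}$, $\{L_n\}$, with the interior-point hypothesis used precisely to rule out the intersections $B_\infty$, $T_\infty$, $L_\infty$. The only point that requires a moment of care — and the closest thing to an obstacle — is checking that $\{L_n\}$ really is decreasing, which I extract from condition (2); once the nesting of all three families is in hand, the proof is essentially forced.
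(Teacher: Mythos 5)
Your proof is correct and takes essentially the same route as the paper: both place $x$ in a level $i_n$ of every sufficiently high tower (using $x \notin L_\infty$ and the nesting of the towers) and then use interiority to rule out $i_n$ staying within $k$ of the bottom (which would put an orbit point in $B_\infty$) or of the top (which would put one in $T_\infty$). The only cosmetic difference is that the paper observes the sequences $i_n$ and $h_n - i_n$ are non-decreasing and so must tend to infinity, while you reach the same conclusion by pigeonhole over the finitely many values in $[0,k)$ together with the nesting of the bases and tops.
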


\begin{proof}
Suppose $x$ is an interior point.  Since $x \notin L_\infty$, $x \in f^{j_M} (B_M)$ for some $M \in \mathbb{N}$ and $0 \leq j_M < h_M$.  By condition (2) of the definition of rank-1 homeomorphisms, there is, for each $m \geq M$, a unique $j_m$ satisfying $0 \leq j_m < h_m$ so that $x \in f^{j_m}(B_m)$.  That the sequences $(j_m: m \geq M)$ and $(h_m - j_m: m \geq M)$ are non-decreasing follows from the fact that $B_{m+1} \subseteq B_m$ and $f^{h_{m+1}-1 } (B_{m+1}) \subseteq f^{h_m -1} (B_m)$ for each $m \in \mathbb{N}$.

To prove the proposition, it suffices to show that  neither of these sequences is eventually constant.  But it is easy to see that if $\{j_m\}$ is eventually constantly $c$, then $f^{-c} (x) \in B_\infty$, which contradicts the fact that $x$ is an interior point.  Similarly, if $\{h_m - j_m\}$ is eventually constantly $c$, then $f^{c-1} (x) \in T_\infty$, which contradicts the fact that $x$ is an interior point.
\end{proof}

\begin{corollary}
Let $x$ be an interior point.  Then for each $i$, there is some $n$ so that $f^i (x)$ is in a level of the stage-$n$ tower that is neither the base nor the top.
\end{corollary}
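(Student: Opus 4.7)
The corollary should follow almost immediately by combining two observations: the orbit-invariance of the notion of interior point and Proposition \ref{prop:interiork} applied with $k=1$. First I would note that the condition defining an interior point refers only to the $f$-orbit of $x$, so for any $i \in \mathbb{Z}$ the point $y := f^i(x)$ has the same orbit as $x$ and is therefore itself an interior point (no point in its orbit lies in $B_\infty$, $T_\infty$, or $L_\infty$).

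Next, I would apply Proposition \ref{prop:interiork} to the interior point $y$ with the choice $k=1$. That yields an $N \in \mathbb{N}$ such that for every $n > N$ there exists $j$ with $1 \leq j < h_n - 1$ and $y \in f^j(B_n)$. Such a level $f^j(B_n)$ is, by construction, distinct from $B_n$ (the base, corresponding to $j = 0$) and from $f^{h_n - 1}(B_n)$ (the top, corresponding to $j = h_n - 1$). Taking any such $n$ gives the required stage.

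There is essentially no obstacle here; the work has already been done in Proposition \ref{prop:interiork}. The only content of the corollary beyond that proposition is the trivial but useful remark that orbits are invariant under iteration of $f$, which lets us bootstrap a statement about $x$ into a statement about every $f^i(x)$. I would keep the proof to two or three sentences and avoid repeating the argument of Proposition \ref{prop:interiork}.
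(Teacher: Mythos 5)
Your argument is correct and is exactly the immediate deduction the paper intends (it leaves the corollary without proof as a direct consequence of Proposition \ref{prop:interiork}): orbit-invariance of interiority plus the proposition with $k=1$ applied to $f^i(x)$, or equivalently the proposition applied to $x$ with $k=|i|+1$. No gap.
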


\begin{proposition}
\label{prop:distinctinterior}
If $x$ and $y$ are distinct interior points, then there is some level of some tower that contains exactly one of $x$ and $y$.
\end{proposition}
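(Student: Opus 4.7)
The plan is to use condition (5) of the definition of a rank-1 homeomorphism: the collection of sets $f^i(B_n)$ and $f^i(L_n)$, with $i \in \Z$ and $n \in \N$, is a basis for the topology of $X$. Since $X$ is Polish and hence Hausdorff, the distinct points $x, y$ can be separated by a basic open set $U$; say $x \in U$ and $y \notin U$. It then suffices to find a level of some tower that contains $x$ and is contained in $U$, for that level separates $x$ from $y$.

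To treat the two possible shapes of $U$ uniformly, I would first locate $p \in \Z$ and $m \in \N$ with $x \in f^p(B_m) \subseteq U$. If $U = f^{i_0}(B_n)$, take $p = i_0$ and $m = n$. If instead $U = f^{i_0}(L_n)$, then $f^{-i_0}(x) \in L_n \setminus L_\infty$ by interiority, so $f^{-i_0}(x) \in L_n \setminus L_m$ for some $m > n$, and this point lies in some level $f^l(B_m)$ of the stage-$m$ tower. Since each level of the stage-$m$ tower either sits in a level of the stage-$n$ tower or in $L_n$, and $f^l(B_m)$ meets $L_n$, we get $f^l(B_m) \subseteq L_n$, and hence $x \in f^{i_0 + l}(B_m) \subseteq U$ with $p := i_0 + l$.

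The remaining difficulty is that $p$ need not lie in $[0, h_m)$, so $f^p(B_m)$ is not itself a level of the stage-$m$ tower. To correct this, I would apply Proposition \ref{prop:interiork} with $k = |p|$: for all sufficiently large $m' \ge m$, the unique $j$ with $0 \le j < h_{m'}$ and $x \in f^j(B_{m'})$ satisfies $|p| \le j < h_{m'} - |p|$, so in particular $0 \le j - p < h_{m'}$. The point $f^{-p}(x)$ belongs to both $B_m$ and $f^{j-p}(B_{m'})$, so the iterated form of condition (3) of the definition (proved by a short induction from the one-step version) forces $f^{j-p}(B_{m'}) \subseteq B_m$. Translating by $f^p$ gives $x \in f^j(B_{m'}) \subseteq f^p(B_m) \subseteq U$, so the level $f^j(B_{m'})$ of the stage-$m'$ tower is the desired separating set. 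I expect the main obstacle to be the $U = f^{i_0}(L_n)$ case, since $L_n$ is not itself a level of any tower: one must first use interiority to descend to a stage $m$ at which $L_n$ has been partly filled in by levels of the stage-$m$ tower before Proposition \ref{prop:interiork} can be brought to bear.
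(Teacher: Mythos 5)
Your proof is correct, but it is organized differently from the paper's. The paper argues by contraposition: it assumes the interior points $x$ and $y$ lie in exactly the same levels of all towers and shows they then lie in the same basic clopen sets $f^k(B_n)$, $f^k(L_n)$, hence are equal; the key move there is to use Proposition \ref{prop:interiork} to place \emph{both} points deep inside a high-stage tower (at least $|k|$ levels from the base and top), shift by $f^{-k}$, and invoke the iterated form of condition (3). You instead give a direct argument: Hausdorffness plus condition (5) yields a basic set $U$ containing $x$ but not $y$, and you then shrink $U$ to a tower level through $x$ --- handling $U = f^{i_0}(L_n)$ by using $f^{-i_0}(x)\notin L_\infty$ to descend to a stage $m$ where the relevant level sits inside $L_n$, and handling the shift $p$ by Proposition \ref{prop:interiork} together with the same iterated condition (3). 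The ingredients are identical, but your route proves the slightly stronger local fact that the tower levels containing an interior point form a neighborhood base at that point, and consequently it uses interiority of $x$ only (the paper's argument needs both points interior, which is all the main theorem requires anyway). Both arguments do rely on the iterated version of condition (3) for non-consecutive stages; you correctly flag that this needs a short induction, which the paper also uses implicitly.
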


\begin{proof}
Suppose that $x$ and $y$ are interior points that are contained in the same levels of the same towers.  We must show that $x=y$.  We do this by showing that $x$ and $y$ are in the same basic clopen sets.

Suppose that for some $n \in \mathbb{N}$ and some $k \in \Z$, either $f^k (B_n)$ or $f^k (L_n)$ contains exactly one of $x$ and $y$.  By Proposition \ref{prop:interiork} we can find $m>n$ so that there are $i,j$ satisfying $|k| \leq i,j < h_m - |k|$ so that $x \in f^i (B_m)$ and $y \in f^j (B_m)$.  But, since $x$ and $y$ are in the same levels of the stage-$m$ tower, $i = j$.  Now $f^{-k}(x)$ and $f^{-k} (y)$ are both in the same level of the stage-$m$ tower.  By repeated application of condition (3) of the definition of a rank-1 homeomorphism, either there is some level of the stage-$n$ tower that contains both $f^{-k}(x)$ and $f^{-k}(y)$, or $f^{-k}(x)$ and $f^{-k}(y)$ are both in $L_n$.  Thus, neither $f^k(B_n)$ nor $f^k(L_n)$ contains exactly one of $x$ and $y$.

Therefore, $x$ and $y$ are in the same basic clopen sets and $x=y$.
\end{proof}

\begin{proposition}
\label{prop:open}
If $U$ is a nonempty open set and $x \notin \bigcup _{i \in \Z} f^i(U)$, then $x$ is the unique fixed point of $f$.
\end{proposition}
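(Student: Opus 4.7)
Let $F := X \setminus \bigcup_{i \in \Z} f^i(U)$, a closed, $f$-invariant set containing $x$. The plan is to show $F$ is a singleton, deduce that $x$ is fixed, and then argue that $f$ has no other fixed point.

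By Proposition \ref{prop:opentower} I can find a tower level $f^j(B_n) \subseteq U$, so $F \subseteq X \setminus \bigcup_i f^i(B_n)$. Since $B_m \subseteq B_n$ for every $m \geq n$, this forces $F \subseteq L_m$ for all such $m$; combined with the $f$-invariance of $F$, every iterate of every point of $F$ lies in $L_m$ for all $m \geq n$.

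The main step is to show that $F$ contains at most one point. I would first observe that the sets $f^i(B_m)$ and $f^i(L_m)$ with $m \geq n$ already form a basis for the topology: each $B_m$ with $m < n$ is a disjoint union of stage-$(m+1)$ levels (by the first proposition of Section 3), and $L_m = L_{m+1} \sqcup \bigsqcup_k f^k(B_{m+1})$ over those $k$ with $f^k(B_{m+1}) \subseteq L_m$, so iterating replaces any basic set at stage $m$ by basic pieces at any larger stage. Now given distinct $y, y' \in F$, zero-dimensionality and this refined basis produce a set $W$ with $y \in W$, $y' \notin W$, where $W = f^i(B_m)$ or $W = f^i(L_m)$ for some $m \geq n$. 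If $W = f^i(B_m)$, then $y \in W$ puts $y$ into $\bigcup_k f^k(B_m) \subseteq \bigcup_k f^k(B_n)$, contradicting $y \in F$. If $W = f^i(L_m)$, then $y' \notin W$ gives $f^{-i}(y') \notin L_m$; but $f$-invariance of $F$ and the previous paragraph force $f^{-i}(y') \in F \subseteq L_m$, again a contradiction. Hence $F = \{x\}$, and since $F$ is $f$-invariant, $f(x) = x$.

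For uniqueness, suppose $y \neq x$ were another fixed point. Since $h_n$ is strictly increasing, $X$ is infinite, so $X \setminus \{x, y\}$ is a nonempty open set $U'$. Both $x$ and $y$ are fixed and lie outside $U'$, so both belong to $X \setminus \bigcup_i f^i(U')$, contradicting the at-most-one-point conclusion of the main step applied to $U'$. The principal obstacle is the basis refinement in the main step; the subsequent case analysis is then immediate.
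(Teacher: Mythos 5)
Your proof is correct, and while it uses the same tools as the paper (Proposition \ref{prop:opentower}, the refinement of low-stage levels and leftover pieces into higher-stage ones, and the clopen basis from condition (5) of the definition), it is organized along a genuinely different route. The paper works directly with the pair $x, f(x)$: since the saturation $\bigcup_{i\in\Z} f^i(U)$ contains some $B_n$, is $f$-invariant, and hence contains every level of every tower, the whole orbit of $x$ lies in $L_\infty$; then $x$ and $f(x)$ belong to exactly the same basic clopen sets (every $f^k(L_m)$, no $f^k(B_m)$), so $x=f(x)$. For uniqueness the paper shows separately that no level of any tower contains a fixed point, so any fixed point also lies in $L_\infty$ and shares all basic sets with $x$. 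You instead prove the cleaner statement that the complement $F$ of the saturation of \emph{any} nonempty open set has at most one point, deduce $f(x)=x$ from $f$-invariance of the singleton, and then get uniqueness by applying the same claim to $U'=X\setminus\{x,y\}$ --- a nice trick that completely avoids the paper's observation about fixed points and tower levels. Your refinement of the basis to stages $m\geq n$ is correct but not strictly necessary: since the saturation is $f$-invariant and contains $B_n$, it contains every level of every tower (low-stage levels being unions of stage-$n$ levels), so $F\subseteq L_m$ and $F\cap f^k(B_m)=\emptyset$ for all $m$ and $k$, and the full basis already yields your two-case contradiction. Two small points you implicitly use and should keep in mind: separating distinct points by a basic set uses that $X$ is Hausdorff, and the nonemptiness of $X\setminus\{x,y\}$ uses that all $B_n$ are nonempty (the sequence being strictly decreasing) so that $X$ is infinite; both are fine as stated.
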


\begin{proof}
Let $U$ be nonempty and open.  By Proposition \ref{prop:opentower}, $U$ contains some level of some tower.  So, $\bigcup _{i \in \Z} f^i(U)$ contains some $B_n$, which contains $B_m$ for every $m>n$.  It follows that $\bigcup _{i \in \Z} f^i(U)$ contains every level of every tower and thus contains everything that is not in $L_\infty$.  In fact, if $x \notin \bigcup _{i \in \Z} f^i(U)$, then the entire orbit of $x$ is contained in $L_\infty$.  

Suppose that $x \notin \bigcup _{i \in \Z} f^i(U)$.  Thus, for all $k \in \Z$ and all $n \in \N$, both $x$ and $f(x)$ are elements of $f^k (L_n)$ and not elements of $f^k (B_n)$.  So, $x$ and $f(x)$ are in the same basic clopen sets and thus, $x = f(x)$.

Now suppose that $y$ is a fixed point of $f$.  Since no element of any level of any tower is a fixed point, $y \in L_\infty$.  Thus, for all $k \in \Z$ and all $n \in \N$, $y \in f^k (L_n)$ and $y \notin f^k (B_n)$.  Thus, $x$ and $y$ are in the same basic clopen sets, and so, $x=y$.
\end{proof}

\section{Further Analysis}

\subsection{Simplifying Assumptions}

As before, let $f$ be a rank-1 homeomorphism of a Polish space $X$ with a non-repeating tower representation $(\{B_n\}, \{h_n\})$.  Let $g$ be a homeomorphism of $X$ that commutes with $f$.  We will work towards proving that there is some $k \in \Z$ so that $f^k = g$.  For the proof of Lemma \ref{commute} below, we will distinguish between two cases: either the tower representation $(\{B_n\}, \{h_n\})$ has arbitrarily long sequences of consecutive spacers, or it has bounded sequences of consecutive spacers.  If the latter holds, then let $a_{max}$ denote the length of the longest sequence of consecutive spacers in $W_\infty$.  Equivalently, $a_{max}$ is the largest natural number for which there exist $n,m\in \N$ so that $0 \leq m \leq m + a_{max} < h_n$ and so that for each $j$ satisfying $m < j \leq m + a_{max}$, $f^j (B_n) \subseteq L_0$.

Before proceeding further, we make some simplifying assumptions, which we can do without loss of generality.  If the tower representation $(\{B_n\}, \{h_n\})$ has bounded sequences of consecutive spacers, we can assume that $h_0 > a_{max}$.  Indeed, we can modify the witnessing sequences $\{B_n\}$ and $\{h_n\}$ by deleting the initial entry of each sequence and still have a tower representation for $f$ that is non-repeating and has absolutely bounded sequences of consecutive spacers.  Doing this repeatedly will guarantee that the initial element of the height sequence will be larger than $a_{max}$.

For the next two simplifying assumptions, consider $g^{-1} (B_0)$.  Since $g$ is a homeomorphism, this nonempty set is open and thus must contain $f^m (B_n)$, for some $n \in \N$ and $m \in \Z$.  Notice that $gf^m (B_n) \subseteq B_0$ and that $gf^m$ is a homeomorphism of $X$ that commutes with $f$.  If $gf^m$ is an integral power of $f$, then so is $g$.  Thus we may assume that $m = 0$ and thus that $g (B_n) \subseteq B_0$.  Also, we may assume that $n=1$, for otherwise we can delete the entries in the sequences $\{B_n\}$ and $\{h_n\}$ that are indexed by 1 through $(n-1)$, inclusive.  With these two simplifying assumptions we now have the following.  If $x\in B_1$, then $g(x) \in B_0$.

\subsection{The Set Z(x)}

For any $x \in X$, let $$Z(x) = \{i \in \Z : f^i (x) \in B_1\}.$$

The crucial fact is that if $x$ is an interior point, then the set $Z(x)$ contains enough information to recover $x$.

\begin{proposition}
\label{Z}
If $x$ and $y$ are interior points with $Z(x) = Z(y)$, then $x=y$.  
\end{proposition}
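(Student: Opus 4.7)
The plan is to show that $x$ and $y$ lie in the same level of every stage-$n$ tower; Proposition \ref{prop:distinctinterior} will then force $x=y$. Since $x$ is interior there is some $s \in Z(x) = Z(y)$, and replacing $(x,y)$ by $(f^s(x), f^s(y))$ preserves both the equality of $Z$-sets and the interior property, so I may assume $x, y \in B_1$. For each $n \geq 1$ let $j_n, k_n \in [0,h_n)$ be the unique indices with $x \in f^{j_n}(B_n)$ and $y \in f^{k_n}(B_n)$; iterating condition (3) of the rank-1 definition gives $j_n, k_n \in E_{n,1}$.

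The key computation is the following: for each $i \in [-j_n, h_n - j_n)$, the point $f^i(x)$ lies in level $j_n + i$ of the stage-$n$ tower, so $i \in Z(x)$ if and only if $j_n + i \in E_{n,1}$; the analogous statement holds for $y$ with $k_n$ in place of $j_n$. Restricting the identity $Z(x) = Z(y)$ to the overlap of the two windows and writing $d_n := k_n - j_n$, one obtains the combinatorial statement: for every $q \in [0, h_n - |d_n|)$, $q \in E_{n,1}$ if and only if $q + |d_n| \in E_{n,1}$. Equivalently, the finite word $W_n$ has period $|d_n|$ on its initial segment of length $h_n - |d_n|$, and since $W_n$ is a prefix of $W_\infty$, so does $W_\infty$.

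Assume now, for contradiction, that $x \neq y$. Proposition \ref{prop:distinctinterior} combined with condition (3) forces $j_n \neq k_n$ for all sufficiently large $n$, so along an infinite subsequence $S$ (and after possibly swapping $x$ and $y$) I may assume $d_n > 0$ for every $n \in S$. If $\{d_n : n \in S\}$ is bounded, pass to a sub-subsequence on which $d_n$ equals a positive constant $d^*$; then $W_\infty$ has period $d^*$ on arbitrarily long initial segments, hence is periodic, contradicting the non-repeating hypothesis. If instead $d_n \to \infty$ along $S$, Proposition \ref{prop:interiork} also lets us arrange $h_n - d_n \to \infty$. For each fixed $m$, eventually $l(m) \leq h_n - d_n$, and the length-$l(m)$ prefix of $W_n$ — which begins with an expected occurrence of $W_m$ at position $0$ — re-occurs at position $d_n$ by the period-$d_n$ property; Proposition \ref{prop:expected} then forces $d_n \in E_{n,m}$. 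Combining these memberships $d_n \in E_{n,m}$ for every fixed $m$ with the recursive construction of the $W_n$ forces the spacer sequence $W_\infty$ to be periodic, again a contradiction. The last step — converting the chain of memberships $d_n \in E_{n,m}$ into a genuine periodicity of $W_\infty$ — is the main obstacle and will require careful tracking of how the differences of elements of the successive sets $E_{m+1, m}$ constrain the evolution of $d_n$.
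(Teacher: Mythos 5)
Your setup is sound: the reduction to $x,y\in B_1$, the identification of $Z(x)\cap[-j_n,h_n-j_n)$ with $E_{n,1}-j_n$, the resulting windowed periodicity of $W_n$ by $d_n=k_n-j_n$, the fact that $d_n\neq 0$ for large $n$, and the bounded-$d_n$ case (constant $d^*$ gives a genuine period of $W_\infty$, contradicting non-repeating) are all correct. The derivation $d_n\in E_{n,m}$ via Proposition \ref{prop:expected} is also fine. But the proof is not complete: in the unbounded case the final claim --- that the memberships $d_n\in E_{n,m}$ for every fixed $m$, together with the recursive structure of the words, force $W_\infty$ to be periodic --- is asserted rather than proved, and you yourself flag it as ``the main obstacle.'' This is a genuine gap, not a routine detail: the combinatorial data you have extracted (each $W_n$ has a period $d_n$ with $h_n-d_n\to\infty$, and $d_n$ starts an expected occurrence of each fixed $W_m$) does not self-evidently imply periodicity. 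Sturmian-type hierarchical words show that a non-periodic word can have arbitrarily long prefixes admitting proportionally long periods, so any proof of your claim would have to exploit the relation $d_{n+1}-d_n\in E_{n+1,n}-E_{n+1,n}$ in a serious way, which you have not done.

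The gap can be closed much more directly, and in a way that collapses your two cases into one --- this is essentially the paper's argument. Fix one index $m$ with $d_m\neq 0$, say $0<d_m<h_m$. For large $N$ (using Proposition \ref{prop:interiork} to keep both points at least $l(m)$ levels below the top of the stage-$N$ tower), apply your windowed periodicity of $W_N$ by $d_N$ not at position $0$ but at position $j_N-j_m$: the length-$l(m)$ subword there begins with an expected occurrence of $W_m$ (since $j_N-j_m\in E_{N,m}$), and by periodicity the same subword occurs at $(j_N-j_m)+d_N=k_N-j_m$. Proposition \ref{prop:expected} then forces $k_N-j_m\in E_{N,m}$; but $k_N-k_m\in E_{N,m}$ as well, and these two elements of $E_{N,m}$ differ by $d_m$ with $0<d_m<h_m$, contradicting the disjointness (equivalently the $h_m$-spacing) of expected occurrences of $W_m$ in $W_N$. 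No periodicity of $W_\infty$ is needed, and no case distinction on the size of $d_n$ is required.
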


\begin{proof}
Suppose that $x$ and $y$ are distinct interior points and that $Z(x) = Z(y)$.  By Proposition \ref{prop:distinctinterior}, some level of some tower contains exactly one of $x$ and $y$.  
It follows from condition (3) of the definition that for sufficiently large $n$ there is a level of the stage-$n$ tower that contains exactly one of $x$ and $y$.  As $x$ and $y$ are both interior points, each of $x$ and $y$ are in some level of the stage-$n$ tower, for sufficiently large $n$.  Choose such an $n$ and let $i$ and $j$ be such that $0 \leq i, j < h_n$ and $x \in f^i (B_n)$ and $y \in f^j (B_n)$.  Without loss of generality, assume $i<j$.

By Proposition \ref{prop:interiork} there is some $N$ so that neither $x$ nor $y$ are in any of the top $l(n)$ levels of the stage-$N$ tower (i.e., for all $0 \leq k <l(n)$, neither $f^k (x)$ nor $f^k(y)$ is in $f^{h_N -1} (B_N)$, the top of the stage-$N$ tower).  Recall that $l(n) \in \mathbb{N}$ is such that if $s$ is subword of $W_N$ of length $l(n)$ that begins with an expected occurrence of $W_n$, then every occurrence of $s$ in $W_N$ begins with an expected occurrence of $W_n$ (see Proposition \ref{prop:expected}).

Since $Z(x) = Z(y)$, we know that for all $k\in \mathbb{Z}$, $f^k (x) \in B_1$ iff $f^k (y) \in B_1$.  It immediately follows from this that if $0 \leq m < h_1$, then for all $k\in \mathbb{Z}$, $f^k (x) \in f^m (B_1)$ iff $f^k (y) \in f^m (B_1)$.  In other words, we know that for all $k$,  $f^k (x)$ is in level $m$ of the stage-$1$ tower iff  $f^k (y)$ is in level $m$ of the stage-$1$ tower.  But each level of the stage-$1$ tower is either completely contained in a level of the stage-0 tower or is disjoint from all levels of the stage-0 tower.  Thus for all $k \in \mathbb{Z}$, 
\begin{equation} 
\label{equation}
f^k (x) \in \bigcup_{0 \leq m < h_0} f^m (B_0) \textnormal{\hspace{.1in} iff \hspace{.1in}} f^k (y) \in \bigcup_{0 \leq m < h_0} f^m (B_0) .
\end{equation}

Now consider the points $f^{-i}(x)$, $f^{-i +1} (x)$, \dots, $f^{-i + l(n)-1} (x)$.  Since $x$ is not in any of the top $l(n)$ levels of the stage-$N$ tower, these points correspond to a subword of length $l(n)$ in $W_N$ that begins with an expected occurrence of $W_n$.  Call this subword $s$.

But now consider the points $f^{-i}(y)$, $f^{-i +1} (y)$, \dots, $f^{-i + l(n)-1} (y)$.  Since $y$ is not in any of the top $l(n)$ levels of the stage-$N$ tower, these points correspond to a subword of length $l(n)$ in $W_N$.  In fact, by equation (1) above, this subword is exactly $s$.  However, since $y$ is on level $j$ of the stage-$n$ tower and $i <j$, the occurrence of $s$ in $W_N$ that corresponds to the points $f^{-i}(y)$, $f^{-i +1} (y)$, \dots, $f^{-i + l(N)-1} (y)$ does not begin with an expected occurrence of $W_n$.  This is a contradiction.
\end{proof}

We will analyze the relationship between $Z(x)$ and $Z(g(x))$, but first we mention two facts and prove a proposition.  It is easy to see that distinct elements of $Z(x)$ cannot be two close to each other.  Indeed, if $f^i (x) \in B_1$, i.e., if $f^i (x)$ is in the base of the stage-1 tower, then $f^{i+1} (x)$ must be in the first level of the stage-1 tower, $f^{i + 2} (x)$ must be in the second level of the stage-1 tower, etc.  If $f^{i + k} (x)$ is again in the base of the stage-1 tower, with $k>0$, then it must be the case that $k \geq h_1$.  We thus have the following fact.

\begin{fact}
For any $x\in X$, if $i\neq i^\prime$ are both in $Z(x)$, then $|i - i^\prime| \geq h_1$.
\end{fact}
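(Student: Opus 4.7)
The plan is to apply condition (1) from the definition of a rank-1 homeomorphism, which asserts that $B_1, f(B_1), \ldots, f^{h_1-1}(B_1)$ are pairwise disjoint. I would begin by assuming without loss of generality that $i < i'$ and setting $k = i' - i \geq 1$; the task then reduces to ruling out $1 \leq k < h_1$.

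Suppose for contradiction that $0 < k < h_1$. Since $f^i(x) \in B_1$ by hypothesis, applying $f^k$ yields $f^{i'}(x) = f^k(f^i(x)) \in f^k(B_1)$. But $i' \in Z(x)$ means $f^{i'}(x) \in B_1$ as well, so $f^k(B_1) \cap B_1 \neq \emptyset$. Since $0 < k < h_1$, the sets $f^k(B_1)$ and $B_1$ are distinct levels of the stage-$1$ tower, contradicting their required pairwise disjointness. I anticipate no real obstacle here: the statement is essentially a direct unpacking of condition (1) of the rank-1 definition applied to the orbit segment starting at $f^i(x)$, and the informal sentence preceding the fact statement already outlines precisely this argument.
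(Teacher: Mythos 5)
Your proof is correct and is essentially the same argument the paper gives informally just before the fact: push $f^i(x) \in B_1$ forward $k$ steps and invoke the pairwise disjointness of the levels $B_1, f(B_1), \ldots, f^{h_1-1}(B_1)$ from condition (1). No issues.
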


More generally, and for similar reasons, we have:

\begin{fact}
For any $x\in X$, if $i \neq i^\prime$ and both $f^i (x)$ and $f^{i^\prime} (x)$ are in the same level of the stage-$n$ tower, then $|i - i^\prime| \geq h_n$.
\end{fact}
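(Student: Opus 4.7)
The plan is to reduce Fact~2 directly to condition~(1) of the definition of a rank-1 homeomorphism, using the same idea that appears informally in the paragraph preceding Fact~1. I would start by unpacking what it means for $f^i(x)$ and $f^{i'}(x)$ to lie in the same level of the stage-$n$ tower: there is some $m$ with $0 \le m < h_n$ such that both points lie in $f^m(B_n)$. Applying $f^{-m}$, I get that $f^{i-m}(x) \in B_n$ and $f^{i'-m}(x) \in B_n$. Without loss of generality assume $i < i'$, and set $k = i' - i > 0$ and $y = f^{i-m}(x)$. Then $y \in B_n$ and $f^{k}(y) \in B_n$.

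Next I would argue that this forces $k \geq h_n$. Since $y \in B_n$, we have $f^k(y) \in f^k(B_n)$. If we had $0 < k < h_n$, then by condition~(1) of the definition of rank-1, the sets $B_n, f(B_n), \dots, f^{h_n-1}(B_n)$ are pairwise disjoint, so in particular $f^k(B_n) \cap B_n = \emptyset$. This would contradict $f^k(y) \in B_n$. Hence $k \ge h_n$, i.e., $|i - i'| \ge h_n$, which is exactly the statement of Fact~2.

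I do not expect any real obstacle here: the argument is a two-line consequence of condition~(1) once one strips away the $f^m$ and considers the orbit relative to $B_n$ itself. Fact~1 is the special case $n = 1$, and the only thing the general case requires is the observation that if two points in the orbit of $x$ lie in the same level $f^m(B_n)$, then their $f$-preimages by $m$ both lie in $B_n$, so the gap between them is a gap between two returns of a single point of $B_n$ to $B_n$ under iteration of $f$. The disjointness condition~(1) then does all the work.
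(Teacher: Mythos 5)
Your argument is correct and is essentially the paper's own reasoning: the paper proves Fact 2 only by the remark ``for similar reasons,'' referring to the sketch before Fact 1, which is exactly the pairwise-disjointness of the levels $B_n, f(B_n), \dots, f^{h_n-1}(B_n)$ from condition (1). Translating both points down to the base by $f^{-m}$ and invoking that disjointness, as you do, is the intended two-line proof.
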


\begin{proposition}
\label{prop:unbounded}
If $x$ is an interior point, then $Z(x)$ is neither bounded above nor from below.
\end{proposition}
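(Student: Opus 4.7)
The plan is to locate $x$ precisely inside each high-stage tower and then use the nested-top structure to put many $f$-translates of $x$ back into $B_1$. First, by Proposition \ref{prop:interiork} (in particular, the first few lines of its proof), for all sufficiently large $n$ there is a unique $i_n$ with $0 \leq i_n < h_n$ and $x \in f^{i_n}(B_n)$; moreover, both $\{i_n\}$ and $\{h_n - i_n\}$ are non-decreasing, and because $x$ is an interior point, neither can be eventually constant (the two alternatives would force an orbit point of $x$ into $B_\infty$ or $T_\infty$ respectively). Hence $i_n \to \infty$ and $h_n - i_n \to \infty$. I would also restrict attention to $n \geq 1$, so that $B_n \subseteq B_1$.

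For unboundedness below, the only observation required is that $f^{-i_n}(x) \in B_n \subseteq B_1$, so $-i_n \in Z(x)$; since $i_n \to \infty$, these are arbitrarily negative.

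For unboundedness above, the key step is to invoke condition (4) of the rank-1 definition iteratively to obtain $f^{h_n - 1}(B_n) \subseteq f^{h_1 - 1}(B_1)$, and hence, by applying $f^{-(h_1 - 1)}$, $f^{h_n - h_1}(B_n) \subseteq B_1$. Combining this with $f^{-i_n}(x) \in B_n$ yields $f^{h_n - h_1 - i_n}(x) \in B_1$, so $h_n - h_1 - i_n \in Z(x)$. Since $h_n - i_n \to \infty$, these elements diverge to $+\infty$.

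There is no substantial obstacle here: the only non-routine point is noticing that condition (4) pins down a copy of $B_1$ at a fixed distance $h_1$ below the top of every higher-stage tower, and the rest is bookkeeping on the sequences $\{i_n\}$ and $\{h_n - i_n\}$ whose divergence to infinity is already guaranteed by the interior-point hypothesis.
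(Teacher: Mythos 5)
Your proof is correct and follows essentially the same route as the paper: both use Proposition \ref{prop:interiork} to place $x$ away from the bottom and top of high-stage towers, get $-i_n \in Z(x)$ from $B_n \subseteq B_1$, and get $h_n - h_1 - i_n \in Z(x)$ from the nested tops given by condition (4). The only cosmetic difference is that you obtain unboundedness directly by letting $n \to \infty$, whereas the paper first reduces to showing $Z$ of every interior point contains one positive and one negative element and then translates along the orbit.
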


\begin{proof}
It is clear from the definition of interior point that $x$ is an interior point iff every element of the orbit of $x$ under $f$ is an interior point.  It thus suffices to show that for each interior point $x$, $Z(x)$ contains both a positive and a negative element.

By proposition \ref{prop:interiork} there is some $n$ and some $i$ satisfying $h_1 \leq i < h_n -h_1$ and so that $x \in f^i (B_n)$.  Now $f^{-i} (x) \in B_n \subseteq B_1$, so $-i$ is a negative element of $Z(x)$.  But also, $f^{h_n - i -1} (x)$ is in the top of the stage-$n$ tower and thus is in the top of the stage-1 tower.  So $f^{h_n - i - h_1} (x) \in B_1$ and hence, $h_n - i - h_1$ is a positive element of $Z(x)$.
\end{proof}

\subsection{The Function $\phi _x$}

We now work towards showing a very rigid connection between $Z(x)$ and $Z(g(x))$, as long as $x$ and $g(x)$ are interior points.  

For any $x \in X$, there is a natural way to define a function from $Z(x)$ to $Z(g(x))$.  If $i \in Z(x)$, then $f^i(x) \in B_1$.  This implies that $gf^i (x) \in B_0$.  So there is a unique $m= m(i)$ with $0 \leq m < h_1$ so that $gf^i(x)$ is in level $m$ of the stage-1 tower, i.e., in $f^m (B_1)$.  Now $f^{i-m} g(x) \in B_1$, so $i-m \in Z(g(x))$.  We thus have a function $\phi_x : Z(x) \rightarrow Z(g(x))$, given by $\phi_x(i) = i-m$.  It is clear that for each $i\in Z(x)$, $$i - h_1 < \phi_x(i) \leq i.$$

A priori, it may be the case that $m$ depends on $i$.  The main line of argument in this paper hinges on the fact, shown in Lemma \ref{commute} below, that as long as $x$ and $g(x)$ are both interior points, there is no such dependence. 

 

\begin{lemma}
\label{inj}
For any $x$, the function $\phi_x$ is an order preserving injection.
\end{lemma}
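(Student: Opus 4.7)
The plan is to prove this directly from the definition of $\phi_x$ together with Fact 1, which forces distinct elements of $Z(x)$ to be at least $h_1$ apart. Order preservation will be immediate once this spacing is combined with the fact that the ``correction'' $m(i)$ defining $\phi_x(i) = i - m(i)$ always lies in $[0, h_1)$, so the correction cannot swap the order of two points that are already at least $h_1$ apart. Injectivity is then a formal consequence of being order preserving on a totally ordered subset of $\Z$.

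More concretely, I would proceed as follows. Fix $x \in X$ and let $i, i' \in Z(x)$ with $i < i'$. By Fact 1, we have $i' - i \geq h_1$. Writing $\phi_x(i) = i - m(i)$ and $\phi_x(i') = i' - m(i')$ where $0 \leq m(i), m(i') < h_1$, compute
\[
\phi_x(i') - \phi_x(i) \;=\; (i' - i) - (m(i') - m(i)).
\]
Since $m(i') - m(i) \leq m(i') \leq h_1 - 1 < h_1$ and $i' - i \geq h_1$, it follows that $\phi_x(i') - \phi_x(i) > 0$, so $\phi_x(i) < \phi_x(i')$. This establishes that $\phi_x$ is order preserving. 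Any order-preserving map between subsets of $\Z$ (with the induced order) is automatically injective: if $i \neq i'$, then one of $i < i'$ or $i' < i$ holds, and the corresponding strict inequality on $\phi_x$ values rules out $\phi_x(i) = \phi_x(i')$.

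There is no real obstacle here; the lemma is essentially a bookkeeping observation that records in clean form the relationship already implicit in the definition of $\phi_x$. The only ingredient beyond unpacking definitions is Fact 1, which is precisely what is needed to rule out order reversal caused by the mod-$h_1$ correction. This simple lemma is presumably the setup for the deeper Lemma \ref{commute}, which will show that $m(i)$ is in fact independent of $i$ (so $\phi_x$ is just a shift) whenever $x$ and $g(x)$ are interior points.
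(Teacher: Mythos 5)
Your proof is correct and is essentially the paper's argument: both rest on Fact 1 (distinct elements of $Z(x)$ differ by at least $h_1$) combined with the bound $i - h_1 < \phi_x(i) \leq i$, i.e.\ $0 \leq m(i) < h_1$. The only difference is cosmetic bookkeeping (you compute the difference $\phi_x(i')-\phi_x(i)$ directly, the paper chains the two inequalities), so nothing further is needed.
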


\begin{proof}
Suppose $i$ and $i^\prime$ are distinct elements of $Z(x)$ with $i > i^\prime$.  By Fact 1 above, we have $i - i^\prime \geq h_1$ and so, $i - h_1 \geq i^\prime$.  But we also have $\phi_x (i) > i - h_1$ and $i^\prime \geq \phi_x(i^\prime)$.  Together these give $\phi_x(i) > \phi_x(i^\prime)$.
\end{proof}

In the next lemma, the levels of the stage-$n$ tower that are subsets of $B_1$ will be important.  For $n>0$, let $r_n$ denote the number of such levels.  Since the definition of rank-1 ensures that the top of the stage-$n$ tower is a subset of the top of the stage-1 tower, the highest level of the stage-$n$ tower that is contained in $B_1$ is level $h_n - h_1 $ (for $n>0$).

\begin{lemma}
\label{surj}
If $x$ is an interior point, then the function $\phi_x$ is surjective.
\end{lemma}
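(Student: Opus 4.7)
The plan is to reduce the surjectivity of $\phi_x$ to an orbit-visit statement about $x$, and then to use the rank-1 structure to verify that statement. Unpacking the definition of $\phi_x$ and using $fg = gf$, we have $\phi_x(i) = j$ iff $gf^i(x) \in f^{i-j}(B_1)$ iff (by commutativity) $gf^j(x) \in B_1$ together with $i \in [j, j+h_1) \cap Z(x)$. So the claim reduces to: for every $j \in Z(g(x))$, the interval $[j, j+h_1)$ meets $Z(x)$.

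To verify this, I would fix $j \in Z(g(x))$ and set $y = f^j(x)$, so that $y$ is interior and $g(y) \in B_1$, and we must find some $k \in [0, h_1)$ with $f^k(y) \in B_1$. By Proposition \ref{prop:interiork}, for any large threshold we can pick $n$ and $l$ with $y \in f^l(B_n)$ and both $l$ and $h_n - l$ exceeding $h_1$. Writing $w = f^{-l}(y) \in B_n$, the simplifying assumption gives $g(w) \in g(B_n) \subseteq B_0$, and condition (3) of the rank-1 definition places $g(w)$ at a unique level $k^* \in K_n := \{k \in [0, h_n) : f^k(B_n) \subseteq B_0\}$ of the stage-$n$ tower. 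From $g(y) = f^l(g(w)) \in B_1$ and condition (3) again, $l+k^*$ lies in $E_n := \{k \in [0, h_n) : f^k(B_n) \subseteq B_1\}$, the set of starts of expected occurrences of $W_1$ in $W_n$.

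The remaining task is to produce an element of $E_n$ in the window $[l, l+h_1)$. The main obstacle is controlling $k^*$: it need not lie in $[0, h_1)$, so the known element $l+k^* \in E_n$ may sit far outside the target window. The key refinement is that $K_n$ decomposes as $\bigsqcup_{e \in E_n}(e + K)$, where $K = \{k \in [0, h_1) : f^k(B_1) \subseteq B_0\}$; writing $k^* = e^* + k''$ with $e^* \in E_n$ and $k'' \in K$, one obtains two expected $W_1$-starts of $W_n$ at positions $e^*$ and $l + e^* + k''$, so that $l + k''$ appears as a sum of consecutive gaps of the form $h_1 + (\textrm{spacer count})$ in $E_n$. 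A careful combinatorial argument based on this decomposition and on Proposition \ref{prop:expected}, used to rule out alignments of subwords of $W_n$ incompatible with the non-repeating structure, should then force $E_n \cap [l, l+h_1) \neq \emptyset$. I expect this final combinatorial step to be the heart of the proof.
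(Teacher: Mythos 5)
Your opening reduction is correct: unpacking $\phi_x(i)=i-m$ with $gf^i(x)\in f^m(B_1)$, $0\le m<h_1$, and using $fg=gf$ together with the disjointness of the levels of the stage-$1$ tower, one checks that $j\in\mathrm{rng}(\phi_x)$ exactly when $Z(x)\cap[j,j+h_1)\neq\emptyset$, so surjectivity is equivalent to the orbit-visit statement you isolate. But the proof stops precisely where the content of the lemma lies. The only combinatorial data you actually extract are two positional facts about $W_n$: $e^*\in E_n$ and (when it makes sense) $l+e^*+k''\in E_n$ with $k''\in K\subseteq[0,h_1)$. These facts are consistent with $E_n\cap[l,l+h_1)=\emptyset$: nothing prevents, say, $E_n\supseteq\{0,\,e^*,\,l+e^*\}$ with the gap above $l$ equal to $e^*\ge h_1$, i.e.\ the nearest element of $E_n$ above $l$ being $l+e^*+k''$ itself. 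Neither the non-repeating hypothesis nor Proposition \ref{prop:expected} can rule such configurations out: non-repeating is a property of $W_\infty$, not of any single finite word $W_n$ (a fixed $W_n$ can look completely periodic), and Proposition \ref{prop:expected} only says that a sufficiently long context determines whether a given occurrence of $W_n$ is expected; it gives no upper bound on the distance to the next expected occurrence of $W_1$. So the ``careful combinatorial argument'' you defer is not a finishing touch — with only the data you have put on the table it cannot succeed, because that data underdetermines the answer. There is also a range problem in your setup: $g(w)$ lies in level $k^*$ of the stage-$n$ tower with $k^*$ possibly close to $h_n$, so the identity placing $g(y)=f^l(g(w))$ at level $l+k^*$, and hence the membership $l+k^*\in E_n$, is only valid when $l+k^*<h_n$, which your choice of $n$ and $l$ does not guarantee.

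The mechanism the paper uses is of a different, global kind, and some ingredient of that kind is what your single-alignment analysis is missing. Supposing $j\in Z(g(x))\setminus\mathrm{rng}(\phi_x)$, the paper places $f^{j+h_1-1}(x)$ at some level $m$ of a stage-$n$ tower and counts visits to $B_1$ over a whole pass through the tower: on $I=[j+h_1-1-m,\,j+h_n-1-m]$ the orbit of $x$ meets each of the bottom $h_n-h_1+1$ levels once, hence meets $B_1$ exactly $r_n$ times; the order-preserving injection $\phi_x$ (Lemma \ref{inj}) pushes these $r_n$ elements of $Z(x)$ into $J=[j-m,\,j+h_n-1-m]$, and adding the missed point $j$ gives more than $r_n$ elements of $Z(g(x))$ in a window of length $h_n$, forcing two visits of the orbit of $g(x)$ to the same level of the stage-$n$ tower within fewer than $h_n$ steps, contradicting Fact 2. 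Note that this argument uses injectivity of $\phi_x$ and the return-time bound of Fact 2, neither of which appears in your sketch; any completion of your approach would have to bring in information of comparable strength (many orbit points at once, not one alignment).
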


\begin{proof}
Let $x$ be an interior point and suppose $j \in Z(g(x)) \setminus \mathrm{rng}  (\phi _x)$.  Consider the point $f^{j + h_1 -1} (x)$.  Since $x$ is interior, we can find some $n >1$ and $0 \leq m < h_n$ so that $f^{j + h_1 -1} (x) \in f^m (B_n)$.  Clearly, $f^{j + h_1 -1 -m} (x)$ is in the base of the stage-$n$ tower.

Now consider the interval $I = [j+ h_1 - 1 -m, j+ h_n -1 -m]$.  The set $\{f^i (x) : i \in I\}$ contains one element from each of the bottom $h_n - h_1 +1$ levels of the stage-$n$ tower.  This includes all of the levels that are subsets of $B_1$.  So $|I \cap Z(x)| = r_n$.

Now consider the interval $J = [j-m, j+h_n -1 -m]$.  If $i \in J \cap Z(g(x))$, then $f^i (g(x)) \in B_1$, and so $f^i (g(x))$ is in some level of the stage-$n$ tower that is contained in $B_1$.  We claim that $|J \cap Z(g(x))|> r_n$.  First, if $i \in I \cap Z(x)$, then, since $i - h_1 < \phi_x(i) \leq i$, $\phi_x (i) \in J \cap Z(g(x))$.  But $j$ is also in $J \cap Z(g(x))$ and, by assumption, $j \notin \mathrm{rng}(\phi_x)$.  Therefore, $|J \cap Z(g(x))| > r_n$.

This implies the existence of distinct $i, i^\prime \in J$ so that both $f^i (g(x))$ and $f^{i^\prime} (g(x))$ are in the same level of the stage-$n$ tower.  By Fact 2 above, $|i - i^\prime| \geq h_n$.  This is impossible, since $J = [j-m, j+h_n -1 -m]$.
\end{proof}

\subsection{The Function $\Psi _x$}

For $x$ an interior point, we define a function $\Psi _x: Z(x) \rightarrow \N$ as follows.  Let $i \in Z(x)$ and find $j>i$ as small as possible so that $j \in Z(x)$.  Let $\Psi_x (i) = j - i $.

\begin{lemma}
\label{commute}
Suppose $x$ and $g(x)$ are interior points.  Then for each $i \in Z(x)$, $\Psi_x (i) = \Psi_{g(x)} (\phi_x (i))$.
\end{lemma}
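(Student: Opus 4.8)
The plan is to first reduce the statement to a rigidity statement about the shift $m(\cdot)$ appearing in the definition of $\phi_x$, and then prove that rigidity.

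For the reduction: by Lemma \ref{inj} the map $\phi_x$ is order-preserving, and by Lemma \ref{surj} it is onto $Z(g(x))$; since $x$ and $g(x)$ are interior, both $Z(x)$ and $Z(g(x))$ are unbounded above (Proposition \ref{prop:unbounded}), so $\phi_x$ is an order isomorphism between them that carries successor to successor. Hence if $j$ is the $Z(x)$-successor of $i$, then $\phi_x(j)$ is the $Z(g(x))$-successor of $\phi_x(i)$, and $\Psi_{g(x)}(\phi_x(i)) = \phi_x(j) - \phi_x(i) = (j-m(j)) - (i-m(i)) = \Psi_x(i) + (m(i)-m(j))$, where $m(\cdot)$ is as in the definition of $\phi_x$. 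Thus the lemma is equivalent to the assertion that $m(i)=m(j)$ for every consecutive pair $i,j\in Z(x)$, i.e.\ that $m$ is constant on $Z(x)$ --- which is exactly the independence of $m$ from $i$ foreshadowed before Lemma \ref{inj}. Equivalently, one must show that the points $f^{i}g(x)$ and $f^{j}g(x)$, which both lie in $B_0$ by the simplifying assumption $g(B_1)\subseteq B_0$, lie in the same level of the stage-$1$ tower.

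To prove $m(i)=m(j)$, I would locate everything inside a high stage tower and then apply condition (3). Fix $N$ large enough --- and $N$ depending only on the single pair $i,j$ is enough, so this is a pointwise, not uniform, appeal to continuity --- so that: $f^{i}(x)$ and $f^{j}(x)$ lie in levels of the stage-$N$ tower at positions $R$ and $R'=R+\Psi_x(i)$ that are consecutive elements of $E_{N,1}$, both bounded away from $0$ and $h_N$ (Proposition \ref{prop:interiork}); similarly $f^{\phi_x(i)}g(x)$ and $f^{\phi_x(j)}g(x)$ lie at consecutive positions $Q$ and $Q'=Q+\Psi_{g(x)}(\phi_x(i))$ in $E_{N,1}$ bounded away from the ends; and $g$ sends the level of the stage-$N$ tower through $f^{i}(x)$ entirely into $f^{m(i)}(B_1)$ and the level through $f^{j}(x)$ entirely into $f^{m(j)}(B_1)$ (legitimate since $f^{i}g(x)\in f^{m(i)}(B_1)$ and these target sets are clopen). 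The task then becomes to show that the gap in $E_{N,1}$ following $R$ equals the gap following $Q$. The mechanism is that the orbit of $x$ between times $i$ and $j$ reads the block of $W_N$ starting at $R$ --- an occurrence of $W_1$ followed by $\Psi_x(i)-h_1$ spacers --- and $f$-equivariance of $g$ transports this block to the orbit of $g(x)$, where, because of the locating choices, it appears as the same block of $W_N$, beginning at the position $Q$ shifted by $m(i)$; then Proposition \ref{prop:expected} (applied with $n=1$) recognizes the occurrence of $W_1$ at $Q+\Psi_{g(x)}(\phi_x(i))$ as expected and forces the two gaps to coincide, so $\Psi_x(i)=\Psi_{g(x)}(\phi_x(i))$ and $m(i)=m(j)$. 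This is where the case split announced before the lemma enters: when the tower representation has bounded runs of consecutive spacers (with $h_0>a_{max}$ arranged), all values of $\Psi_x$ lie in $[h_1,h_1+a_{max}]$, so a single window length such as $l(1)+2h_1+a_{max}$ suffices for the appeal to Proposition \ref{prop:expected}; when it has arbitrarily long runs of spacers this fails, but then a run of at least $h_1$ consecutive spacers is itself diagnostic --- after it the orbit must re-enter $B_1$ and the return time is pinned down directly --- so one handles short returns by the bounded-window argument and long returns by this observation.

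The step I expect to be the main obstacle is making the ``transport of the block read by the orbit'' rigorous: passing from pointwise continuity of $g$ to the statement that a bounded amount of combinatorial data along the orbit of $x$ near $f^{i}(x)$ determines, via $g$, the analogous data near $f^{i}g(x)$ at a fixed finite stage $N$ --- the non-compactness of $X$ means there is no uniform modulus of continuity, so everything must be carried out along the one orbit --- together with the bookkeeping needed to verify that the bounded-spacer and arbitrarily-long-spacer cases between them cover every possible value of $\Psi_x(i)$.
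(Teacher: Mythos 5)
Your reduction of the lemma to the statement that $m(i)=m(j)$ for consecutive $i,j\in Z(x)$ is fine (it uses Lemmas \ref{inj} and \ref{surj} exactly as the paper does), but the core of your argument --- the ``transport of the block read by the orbit'' --- is a genuine gap, not merely a technical obstacle. The only combinatorial information that commutation plus the simplifying assumption $g(B_1)\subseteq B_0$ transfers from the orbit of $x$ to the orbit of $g(x)$ is: for each $k\in Z(x)$, the point $f^k g(x)$ lies in $B_0$. It does \emph{not} tell you which levels of the stage-$1$ (or stage-$N$) tower the points $f^k g(x)$ occupy for the intermediate times $\phi_x(i)<k<\phi_x(j)$, because $g$ is an arbitrary commuting homeomorphism and does not carry towers to towers. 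So the assertion that the orbit of $g(x)$ reads ``the same block of $W_N$, beginning at $Q$ shifted by $m(i)$'' is precisely the statement $\Psi_{g(x)}(\phi_x(i))=\Psi_x(i)$ (equivalently $m(i)=m(j)$) that you are trying to prove; your appeal to Proposition \ref{prop:expected} therefore has nothing to act on. Note that in the bounded-spacer case no such machinery is needed: from $f^{i+\Psi_{g(x)}(\phi_x(i))}g(x)\in B_0$ and $f^{i+\Psi_x(i)}g(x)\in B_0$, Fact 2 for the stage-$0$ tower together with the arranged inequality $h_0>a_{max}$ already forces the two return times to coincide, which is the paper's (much shorter) argument.

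The second, related gap is your treatment of the arbitrarily-long-spacer case. Your suggestion that ``a run of at least $h_1$ consecutive spacers is itself diagnostic'' and that the return time is ``pinned down directly'' does not survive scrutiny: when runs of spacers are unbounded there is no a priori bound on $|\Psi_x(i)-\Psi_{g(x)}(\phi_x(i))|$, and the local data available (membership of $f^k g(x)$ in $B_0$ at the times $k\in Z(x)$, plus membership in $B_1$ at the two times $\phi_x(i),\phi_x(j)$) is compatible with many values of $\phi_x(j)$, so no pointwise argument along a single return can pin it down. This is exactly why the paper abandons the local viewpoint in that case and proves a global almost-periodicity statement (the Claim: for each $n$, $\Psi_x$ is constant on every congruence class mod $r_n$ except one, on which it is unbounded), then compares $\Psi_x$ and $\Psi_{g(x)}$ as whole sequences, using the order isomorphism $\phi_x$ and the inequalities $0\le i_k-j_k<h_1$ to reach a contradiction. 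Your proposal contains no substitute for this global step, so as written it establishes the lemma only in the bounded-spacer case, and even there by a detour through an unjustified transport claim rather than the direct Fact 2 argument.
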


The proof of Lemma \ref{commute} will be done differently for the two cases.  We first give the proof in the case that the tower representation $(\{B_n\}, \{h_n\})$ has bounded sequences of consecutive spacers.

\begin{proof}
Let $x$ be such that $x$ and $g(x)$ are interior points and let $ i \in Z(x)$.  We want to show that $\Psi_x (i) = \Psi_{g(x)} (\phi_x (i))$.  Let $j$ be the smallest element of $Z(x)$ that is greater than $i$.  We have $\Psi_x (i) = j - i$.  Also, since we are in the case with absolutely bounded sequences of consecutive spacers, we have:
\begin{equation}
0 \leq \Psi_x (i) - h_1 \leq a_{max}
\end{equation}

Since $\phi _x : Z(x) \rightarrow Z(g(x))$ is an order preserving bijection, we have that $\phi_x (j)$ is the largest element of $Z(g(x))$ that is greater than $\phi _x (i)$.  So we have $\Psi_{g(x)} (\phi_x(i)) = \phi_x(j) - \phi_x (i) $.  And, as before, we have:
\begin{equation}
0 \leq \Psi _{g(x)} (\phi_x (i)) - h_1 \leq a_{max}
\end{equation}

Equations (2) and (3) clearly imply: 
\begin{equation}
|\Psi _x (i) - \Psi _{g(x)} (\phi _x (i))| \leq a_{max}
\end{equation}

We will show that in fact, $\Psi_x (i) = \Psi_{g(x)} (\phi_x(i)) $.

First, consider the point $f^i g(x)$.  Since $i \in Z(x)$, $f^i g(x) \in B_0$.  We claim that also $f^{i + \Psi_{g(x)} (\phi_x(i))} g (x) \in B_0$.  Indeed, since $f^i g(x) \in B_0$, the point $f^i g(x)$ must be in some level of the stage-1 tower.  Let $0 \leq m < h_1$ be such that $f^i g(x) \in f^m (B_1)$.  Then $f^{i - m} g(x) \in B_1$ and $\phi _x (i) = i - m$.  Now $f^{i - m + \Psi_{g(x)} (\phi_x(i))} g (x) \in B_1$, and so $f^{i  + \Psi_{g(x)} (\phi_x(i))} g (x) \in f^m (B_1)$.  Since we know that $f^m (B_1)$ intersects $B_0$, it must be contained in $B_0$.  Thus $f^{i + \Psi_{g(x)} (\phi_x(i))} g (x) \in B_0$.

Next, consider the point $f^j g (x)$.  Since $j \in Z(x)$, $f^j g(x) \in B_0$.  But $j =  i + \Psi_x (i)$, so $f^{i + \Psi_x (i)} g(x) \in B_0$.

Suppose that $\Psi_{g(x)} (\phi_x(i)) \neq \Psi_x (i)$.  Then $i +  \Psi_{g(x)} (\phi_x(i)) \neq i  + \Psi_x (i)$.  Fact 2 then implies that $$|(i + \Psi_{g(x)} (\phi_x(i)))-({i + \Psi_x (i)} )| \geq h_0 .$$  Since $h_0 > a_{max}$, we have $$|\Psi_{g(x)} (\phi_x(i))-\Psi_x (i) | > a_{max},$$ which contradicts equation (4) above.
\end{proof}

The proof of Lemma \ref{commute} is more involved in the case that the tower representation $(\{B_n\}, \{h_n\})$ has arbitrarily long sequences of spacers.  In this case, we need to show that $\Psi_x$ and $\Psi_{g(x)}$ exhibit an almost periodic behavior.  Since the elements of $Z(x)$ are not equally spaced, it will be easier to describe this type of periodicity if we first identify $\Z$ with $Z(x)$.  This can be done because, by Proposition \ref{prop:unbounded}, $Z(x)$ is neither bounded from above nor from below.

Fix some $i_0 \in  Z(x)$ and let this correspond to $0 \in \Z$.  This extends uniquely to an order preserving correspondence of $\Z$ with $Z(x)$.  Let $i_k$ denote the element of $Z(x)$ that corresponds to $k \in \Z$.  This correspondence between $\Z$ and $Z(x)$ extends through the order preserving bijection $\phi: Z(x) \rightarrow Z(g(x))$ to a correspondence between $\Z$ and $Z(g(x))$.  For $k \in \Z$, let $j_k$ denote $\phi (i_k) \in Z(g(x))$.  In the ensuing discussion of $\Psi_x$ and $\Psi_{g(x)}$ we will take the domain of each function to be $\Z$; that is, we will write $\Psi_x (k)$ in place of $\Psi_x (i_k)$ and we will write $\Psi_{g(x)} (k)$ in place of $\Psi_{g(x)} (j_k)$.

Recall that for $n>0$, $r_n$ is the number of levels in the stage-$n$ tower that are contained in $B_1$.
\begin{claim}
Let $x$ be an interior point.  
\begin{enumerate}
\item  For each $n>1$, $\Psi_x$ is constant each congruence class mod $r_n$ except one.  On this last congruence class mod $r_n$, $\Psi_x$ is unbounded.
\item  For each $k \in \Z$, there is an $n>1$ so that $\Psi_x$ is constant on the congruence class of $k$ mod $r_n$.
\end{enumerate}
\end{claim}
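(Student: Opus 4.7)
The plan is to organize $Z(x)$ into length-$r_n$ blocks, one block per stage-$n$ tower visit in the orbit of $x$. Set $S_n = \{s \in [0, h_n) : f^s(B_n) \subseteq B_1\}$; this has size $r_n$, and writing $S_n = \{s_1 < s_2 < \cdots < s_{r_n}\}$ we have $s_1 = 0$ while $s_{r_n} = h_n - h_1$ (condition (4) of the definition forces the top of stage-$n$ inside the top of stage-$1$). Because $x$ is interior, its orbit enters and exits stage-$n$ towers repeatedly, and during a single traversal of stage-$n$ the $r_n$ times the orbit hits $B_1$ occur at relative positions $s_1,\ldots,s_{r_n}$. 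Hence the enumeration $\{i_k\}$ of $Z(x)$ splits into consecutive blocks of length $r_n$; within a block the $j$-th gap is $g_j := s_{j+1} - s_j$, and the gap from the last element of one block to the first element of the next equals $h_1 + a$, where $a \geq 0$ counts the spacers (in $L_n$) separating the two traversals.

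For part (1), the position of $i_k$ within its block depends only on $k \bmod r_n$, via the fixed offset determined by the position of $i_0$. So on the $r_n - 1$ residue classes corresponding to non-terminal positions, $\Psi_x$ is constantly one of $g_1,\ldots,g_{r_n-1}$; on the unique remaining residue class, $\Psi_x(k)$ always records a boundary gap $h_1 + a$. To verify that this last residue class carries unbounded values, I would use the standing assumption that $W_\infty$ has arbitrarily long runs of $1$s, combined with the fact that each $W_n$ begins and ends with $0$: given $L$, one chooses $n'$ so that $W_{n'}$ contains a $1$-run of length exceeding both $L$ and the maximum $1$-run fitting inside $W_n$; such a $1$-run must lie strictly between two consecutive expected occurrences of $W_n$ in $W_{n'}$. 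Since $x$ is interior it lies in some stage-$n'$ tower and its orbit traverses that entire tower, so it encounters this long inter-$W_n$ spacer, producing a value $\geq h_1 + L$ for $\Psi_x$ on the boundary residue class.

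For part (2), fix $k$. The point $f^{i_k}(x)$ is again interior (the interior-point condition is $f$-invariant), so by Proposition~\ref{prop:interiork} applied with parameter $h_1$, for all sufficiently large $n$ we have $f^{i_k}(x) \in f^l(B_n)$ with $h_1 \leq l < h_n - h_1$. Since $f^{i_k}(x) \in B_1$ and any level of stage-$n$ lies either inside $B_1$ or is disjoint from it, $l \in S_n$; and $l < s_{r_n}$ forces $l = s_j$ for some $j < r_n$. Thus $i_k$ is not the terminal element of its stage-$n$ block, the residue $k \bmod r_n$ is not the boundary class, and $\Psi_x$ is constant on that class by part (1). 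I expect the principal obstacle to be the unboundedness step in part (1): one has to confirm that the long $1$-runs known to exist in $W_\infty$ really do appear as inter-$W_n$ boundaries inside the orbit of the specific interior point $x$, which is what the combination of $f$-invariance of interiority and the fact that each $W_n$ begins and ends with $0$ is designed to deliver.
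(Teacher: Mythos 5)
Your proof is correct and follows essentially the same route as the paper: decompose $Z(x)$ into consecutive length-$r_n$ blocks corresponding to stage-$n$ traversals of the orbit, observe that the within-block gaps are fixed (the paper's word $R_n$, your $g_1,\dots,g_{r_n-1}$) so $\Psi_x$ is constant on all residue classes but the boundary one, and show for part (2) that $i_k$ is non-terminal in its block for suitable $n$. The only differences are cosmetic: you invoke Proposition \ref{prop:interiork} where the paper argues via the top of the stage-$1$ versus stage-$n$ tower, and you spell out the unboundedness of the boundary class (that long spacer runs in $W_{n'}$ are actually visited by the orbit), which the paper asserts more tersely.
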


\begin{proof}
For each $n>0$ we will define $R_n$, a sequence of natural numbers of length $r_n -1$.  Let $z$ be any point in the base of the stage-$n$ tower.   Let $\{j_0, j_1, \ldots, j_{r_n-1}\}$ enumerate, from smallest to largest, the elements of the set $\{j \in [0, h_n -1]: f^j (z) \in B_1\}$.  We now define $R_n$ to be the sequence $(j_1 -j_0 , j_2 - j_1 , \ldots, j_{r_n -1} - j_{r_n -2} )$.  

It is clear that the definition of $R_n$ is independent of which point $z \in B_n$ is chosen.  So whenever $i_k \in Z(x)$ is such that $f^{i_k} (x) $ is in the base of the stage-$n$ tower, there is an occurrence of the word $R_n$ that begins at position $k$ in $\Psi_x$.  But since $x$ is an interior point, if $k$ is such that $f^{i_k} (x) \in B_n$, then both $f^{i_{k + r_n}} (x)$ and $f^{i_{k - r_n}} (x)$ are in the base of the stage-$n$ tower.  So for such a $k$ the function $\Psi_x$ is constant on the congruence class of $k+m$ mod $r_n$, for each $0 \leq m < r_n -1$.  But since we are in the case that the tower representation has unbounded sequences of consecutive spacers, the last congruence class mod $r_n$ must be unbounded.  This proves part 1.

Let $k\in \Z$.  Since $f^{i_k} (x)$ is in the base of the stage-1 tower, $f^{i_k + h_1 -1} (x)$ is in the top of the stage-1 tower.  Since $x$ is an interior point, there is some $n$ so that $f^{i_k + h_1 -1} (x)$ is not in the top of the stage-$n$ tower.  For this $n$, $\Psi_x$ is constant on the congruence class of $k$ mod $r_n$.
\end{proof}

We now give the proof of Lemma \ref{commute} in the case that the tower representation $(\{B_n\}\{h_n\})$ has arbitrarily long sequences of spacers.

\begin{proof}
Let $x$ and $g(x)$ be interior points.  We want to show that $\Psi_x = \Psi_{g(x)}$.  Suppose that $\Psi_x (i) \neq \Psi_{g(x)} (i)$.  Choose $n$ so that $\Psi_{g(x)}$ is constant on the set $\{i + k(r_n): k \in \Z\}$ and let $j$ be such that $\Psi_{g(x))}$ is unbounded on the set $\{j + k(r_n): k \in \Z\}$.  Clearly $i$ and $j$ are in different congruence classes mod $r_n$.  If $\Psi_x$ is unbounded on the set $\{j + k(r_n): k \in \Z\}$, then $\Psi_x$ agrees with $\Psi_{g(x)}$ at each position except perhaps those in $\{j + r(z_n): r \in \Z\}$ (which contradicts the fact that $\Psi_x$ and $\Psi_{g(x)}$ differ at $i$).  Thus $\Psi_x$ is constant on the set $\{j + k(r_n): k \in \Z\}$.

Now find $k \in \Z$ so that $\Psi_{g(x)} (k) - \Psi_x (k) \geq h_1$.  We have that $i_{k+1} - i_k =  \Psi_x (k)$ and also that $j_{k+1}  - j_k =  \Psi_{g(x)} (k)$.  It follows that $$\Psi_{g(x)} (k) - \Psi _x (k) = (j_{k+1} - i_{k+1}) + (i_k - j_k).$$

But we know that $j_{k+1} - i_{k+1} \leq 0$ and that $i_k - j_k < h_1$.  Thus, $\Psi_{g(x)} (k) - \Psi _x (k) < h_1$, a contradiction. 
\end{proof}

\begin{lemma}
\label{power}
If $x$ and $g(x)$ are interior points, then for some $k \in \Z$, $f^k (x) = g(x)$.
\end{lemma}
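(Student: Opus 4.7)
The plan is to combine Lemma~\ref{commute} with the injectivity guaranteed by Proposition~\ref{Z}. The key observation is that Lemma~\ref{commute} forces $\phi_x$ to be a translation, so that $Z(g(x))$ is a shift of $Z(x)$ by some integer $c$; the candidate power of $f$ taking $x$ to $g(x)$ will then be $f^{-c}$.

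By Proposition~\ref{prop:unbounded}, both $Z(x)$ and $Z(g(x))$ are unbounded above and below, so I can enumerate them in strictly increasing order as $Z(x) = \{i_k : k \in \Z\}$ and $Z(g(x)) = \{j_k : k \in \Z\}$. Lemma~\ref{inj} and Lemma~\ref{surj} together show that $\phi_x$ is an order-preserving bijection from $Z(x)$ onto $Z(g(x))$, so $\phi_x(i_k) = j_k$ for every $k \in \Z$. Lemma~\ref{commute} then yields
\[
i_{k+1} - i_k \;=\; \Psi_x(i_k) \;=\; \Psi_{g(x)}(j_k) \;=\; j_{k+1} - j_k
\]
for all $k$, so $j_k - i_k$ is independent of $k$. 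Let $c$ denote this common value; then $Z(g(x)) = Z(x) + c$.

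Next, set $y = f^{-c}(x)$. Since $y$ lies in the $f$-orbit of the interior point $x$, and the notion of interior point depends only on the orbit, $y$ is itself an interior point. A direct computation gives $f^i(y) \in B_1$ iff $f^{i-c}(x) \in B_1$ iff $i - c \in Z(x)$, so $Z(y) = Z(x) + c = Z(g(x))$. Since $y$ and $g(x)$ are both interior points with the same $Z$-set, Proposition~\ref{Z} forces $y = g(x)$, and taking $k = -c$ completes the proof.

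The bulk of the work has already been absorbed into Lemma~\ref{commute}; once one knows $\phi_x$ preserves the gap function $\Psi$, the conclusion that $\phi_x$ is a translation follows immediately from order-preservation, and the rest is bookkeeping. I do not anticipate any real obstacle, beyond checking that interior points are closed under iteration by $f$ (immediate from the orbit-based definition) and that the translation amount $c$ determined at index $0$ genuinely agrees with the translation amount at every other index (which is exactly what the telescoping via $\Psi$ guarantees).
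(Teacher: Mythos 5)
Your proof is correct and follows essentially the same route as the paper: Lemma~\ref{commute} together with the order-preserving bijectivity of $\phi_x$ forces the gaps of $Z(x)$ and $Z(g(x))$ to match, hence $i-\phi_x(i)$ is constant, and Proposition~\ref{Z} (with the observation that interior points are orbit-invariant) finishes the argument. The only cosmetic point is that your identity $\phi_x(i_k)=j_k$ requires anchoring the enumeration of $Z(g(x))$ by setting $j_0=\phi_x(i_0)$, which is exactly how the paper defines the $j_k$.
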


\begin{proof}
Suppose $x$ and $g(x)$ are interior points.  We know that for each $i \in Z(x)$, $ i - h_1 < \phi_x(i) \leq i$.  

We claim that $i - \phi _x (i)$ is independent of $i$.  Indeed, if $i - \phi_x (i)$ is not independent of $i$, then we can find consecutive $i$ and $j$ in $Z(x)$ so that $i - \phi _x (i) \neq j - \phi_x (j)$.  By saying that $i$ and $j$ are consecutive elements of $Z(x)$, we formally mean that $j \in Z(x)$ is as small as possible satisfying $j >i$.  So we have $$\phi _x (j) - \phi_x (i) \neq j -i.$$  Since $i$ and $j$ are consecutive elements of $Z(x)$, $j-i = \Psi_x (i) $.  Since $\phi_x : Z(x) \rightarrow Z(g(x))$ is an order preserving bijection we also have that $\phi_x (i)$ and $\phi_x (j)$ are consecutive elements of $Z(g(x))$ and so $\phi _x (j) - \phi_x (i) = \Psi _{g(x)} (\phi_x (i))$.  But then $$\Psi _{g(x)} (\phi_x (i)) \neq \Psi_x (i) $$ and this contradicts Lemma \ref{commute}.

Thus, $i - \phi _x (i)$ is independent of $i$.  Let $k$ be such that for all $i \in Z(x)$, $i - \phi_x (i) =k$.  Since $\phi_x : Z(x) \rightarrow Z(g(x))$ is a bijection we have that $i \in Z(x)$ iff $i - k \in Z(g(x))$.  But clearly, $i \in Z(x)$ iff $i - k \in Z(f^k(x))$.  Thus, $Z(f^k(x)) = Z(g(x))$ and, by Proposition \ref{Z}, $f^k(x) = g(x)$.  
\end{proof}

\section{Proof of the Main Theorem}

We now prove Theorem \ref{thm}.

\begin{proof}
Let $f$ be a rank-1 homeomorphism of a Polish space $X$ with a non-repeating tower representation and let $g$ be a homeomorphism of $X$ that commutes with $f$.  By Proposition \ref{prop:interior}, the set of interior points is comeager in $X$.  So the set $\{x \in X : x \textnormal{ and }g(x) \textnormal{ are interior points}\}$ is also comeager in $X$.

Since both $g$ and $f$ are homeomorphisms of $X$, each $A_i = \{x \in X : g(x) = f^i (x)\}$ is closed.  By Lemma \ref{power}, each element of the comeager set $\{x \in X : x \textnormal{ and }g(x) \textnormal{ are interior points}\}$ is in some $A_i$.  Therefore, some $A_i$ is dense in some nonempty open set $U$.  Since $A_i$ is closed, it contains $U$.  Since $g$ and $f$ commute, $A_i$ is invariant under $T$.  By Proposition \ref{prop:open}, $\bigcup _{i \in \Z} f^i(U)$ is either all of $X$ or all of $X$ except the unique fixed point of $f$.  But the unique fixed point of $f$ must be a fixed point of $g$ (since $g$ and $f$ commute) and thus must be in $A_i$.

We now have that $A_i$ is all of $X$.  Thus $g = f^i$.
\end{proof}

\end{document}